\tikzset{
    circ/.style={draw, circle,inner sep=0pt,minimum size=8mm, font=\scriptsize},
    triangle/.tip={Computer Modern Rightarrow[open,angle=120:3pt]}
}
\newtheorem{theorem}{Theorem}
\newtheorem{proposition}[theorem]{Proposition}
\newtheorem{remark}{Remark}
\newtheorem{assumption}{Assumption}
\def\P{{\mathbb P}}
\newcommand{\E}{\mathbb{E}}
\newcommand{\R}{\mathbb{R}}
\newcommand{\A}{\mathcal{A}_N}
\newcommand{\st}{\R^N}
\newcommand{\opn}{\mathcal{O}}
\newcommand{\mydef}{:=}
\title{\textbf{\Large{Propagation of chaos and phase transition in a stochastic model for a social network}}}
\author{Eva Löcherbach and Kádmo Laxa}
\date{\today}
\begin{document}

\maketitle


\begin{abstract}
We consider a model for a social network with N interacting social actors. This model is a system of interacting marked point processes in which each point process indicates the successive times in which a social actor expresses a “favorable” (+1) or “contrary” (-1) opinion. The orientation and the rate at which an actor expresses an opinion is influenced by the social pressure exerted on this actor. The social pressure of an actor is reset to 0 when the actor expresses an opinion, and simultaneously the social pressures on all the other actors change by h/N in the direction of the opinion that was just expressed.
We prove propagation of chaos of the system, as N diverges to infinity, to a limit nonlinear jumping stochastic differential equation. Moreover, we prove that under certain conditions the limit system exhibits a phase transition described as follows. If h is smaller or equal than a certain threshold, the limit system has only the null Dirac measure as an invariant probability measure, corresponding to a vanishing social pressure on all actors. However, if h is greater than the threshold, the system has two additional non-trivial invariant probability measures. 
One of these measures has support on the positive real numbers and the other is obtained by symmetrization with respect to $ 0,$ having thus support on the negative real numbers.

\textbf{Keywords: } mean-field interaction, interacting point processes with memory of variable length, propagation of chaos, phase transition, social networks.

AMS MSC: 60K35, 60G55, 91D30. 
\end{abstract}

\section{Introduction}

We study a system of interacting marked point processes with memory of variable length modeling a social network. This system can be informally described as follows.

The system is composed of $N\geq 2$ social actors expressing opinions about a certain subject. Each actor is associated to a marked point process that indicates the successive times in which this actor expresses a favorable ($+1$) or a contrary opinion ($-1$). At each time, each actor is associated to its social pressure which is a real value.
The rate in which an actor expresses a $+1$ opinion is a function of the social pressure of the actor at that time and the rate in which an actor expresses a $-1$ opinion is a function of minus the social pressure of the actor at that time.

When an actor expresses a $+1$ opinion, its social pressure is reset to $0,$ and the social pressures of the other actors increase by $h/N$, where $h>0$. Similarly, when an actor expresses a $-1$ opinion, its social pressure is reset to $0,$ and the social pressures of the other actors decrease by $h/N$.

The parameters of the model are the interaction strength $ h > 0 $ and the jump rate function. Concerning the latter, we work under fairly general assumptions; the precise assumptions imposed both on the jump rate and the initial list of social pressures of the system will be specified in Section \ref{sec:model}. Let us now informally present our results.

Consider the stochastic process describing the time evolution of the social pressure of a given actor in a network with $N$ actors. The sequence of these stochastic processes for $N=2,3,\ldots$, is tight. Moreover, the sequence of the associated empirical measures is tight. This is the content of Theorem \ref{teo:tight}.

Since our model has mean field interactions with $1/N$ scaling, some heuristics allow us to ``guess'' the limit stochastic differential equation that should describe the behavior of the social pressure of an actor when $N\to \infty$. In Theorem \ref{teo:sde} we show that this limit differential equation is well posed. Theorem \ref{teo:convergence} completes this fact by establishing the strong {\it propagation of chaos} property. Here we show that the finite system converges to the limit equation and we provide a rate of convergence for the strong error. In particular, asymptotically, 
the evolution of the social pressures of a finite set of actors is described by independent copies of the limit equation. 

Finally, we study the invariant measures of the limit equation. In particular, under a strong structural assumption on the function describing the jump rates of the system, we have a phase transition: If $h$ is smaller or equal than a given threshold, the limit system has only the null Dirac measure as an invariant probability measure. If $h$ is greater than the threshold, then the system possesses, in addition to the null Dirac measure, two nontrivial invariant probability measures which are explicitly given and which are symmetric from each other. This is the content of Theorem \ref{teo:invariant}.

The model studied in the present article is a generalization of the one introduced in \cite{galveslaxa}. In \cite{galveslaxa}, only exponential rate functions where considered, modulated by a 
\textit{polarization coefficient}, and the paper was devoted to the study of metastability when the polarization coefficient diverges, while keeping the system size fixed.  

On the contrary to this first article, in the present article we work in a mean field frame with interactions of order $1/N,$ allowing for more general jump rate functions, with a focus on the large population behavior. 

Since the social pressure of an actor is reset to $0$ each time it expresses an opinion, this model is a system of interacting point processes with memory of variable length. Indeed, our model belongs to the  class of systems of interacting point process with memory of variable length that was introduced in discrete time by \cite{glmodel} and in continuous time by \cite{gl4} to model systems of spiking neurons. 
A main difference with respect to these articles is that here 
we consider marked point processes to keep track of the opinions ($+1$ and $-1$). 

For a self-contained and neurobiologically motivated presentation of this class of variable length memory models for system of interacting point processes, 
we refer the reader to \cite{gb}. We refer to \cite{galveslaxa} 
for the motivation to consider a social network model on this class. Finally, we refer to \cite{wasserman1,socialdynamics} and \cite{aldous} for a general review on opinion dynamics in a social network.

Mean field limits and propagation of chaos for systems of spiking neurons described by interacting point processes with variable length memory were proven by \cite{gl4, evafour, cormier} and  \cite{evamonm}. Related questions are widely studied in other models of interacting point processes, such as  systems of interacting Hawkes processes (see \cite{dfh, chevallier} and \cite{evadashaerny}). We stress that interacting Hawkes processes usually have memory of infinite length. On the contrary to this, in the present article, we consider models with memory of variable length, induced by the reset of the social pressures. This reset also introduces a strong discontinuity and, as a matter of fact, non Lipschitz terms in the compensator of the jump measure that have to be dealt with carefully. These specific characteristics of our model make our results new and original. 

Our article is organized as follows. In Section \ref{sec:model} we define precisely the model, present the basic notation and state the main results. In Section \ref{sec:priori} we provide a priori bounds for the social pressure of an actor, the limit equation and other related quantities. These upper bounds make use of the reset and are important tools to prove our results.  In particular, we use them to control the non Lipschitz terms induced by the reset jumps; this control is based on a clever truncation procedure in the proof of Theorem \ref{teo:convergence} which is entirely new. Moreover, we prove Theorem \ref{teo:bound}, stated in Subsection \ref{subsec:teobound}, that gives an upper bound for the social pressure of an actor without the dependency on the initial list of social pressures.
In Sections \ref{sec:tight}, \ref{sec:sde}, \ref{sec:convergence} and \ref{sec:invariant} we prove Theorems \ref{teo:tight}, \ref{teo:sde}, \ref{teo:convergence} and \ref{teo:invariant}, respectively.

\section{The model, basic notation and results} \label{sec:model}

Let $\A=\{1,2,...,N\}$ be the set of social actors, with $N \geq 3$, and let $\mathcal{O}=\{-1,+1\}$ be the set of opinions that an actor can express, where $+1$ (respectively, $-1$) represents  a \textit{favorable} opinion (respectively, a \textit{contrary} opinion).

A list of social pressures $u=(u(a): a \in \A)$ is a list in which $u(a)$ is a real number indicating the social pressure of actor $a \in \A$. To describe the time evolution of the social network we introduce a family of maps on the set of lists of social pressures. For any actor $a \in \A$, for any opinion $o \in \mathcal{O}$ and for any list of social pressures $u=(u(a): a \in \A)$, 
we define the new list $\pi^{a,o}_{h,N} (u)$, for a fixed $h \in (0, +\infty)$ as follows. For all $b \in \A$, 
$$
\pi^{a,o}_{h,N}(u)(b)\mydef
\begin{cases}
u(b)+oh/N, \text{ if } b\neq a, \\
0, \text{ if } b  = a.
\end{cases}
$$

The time evolution of the list of social pressures $(U_t^{N})_{t\in [0,+\infty)}$ is a Markov jump process taking values in the set $\st$
with infinitesimal generator defined as follows.
\begin{equation} \label{generator}
\mathcal{G}f(u)\mydef \sum_{o \in \mathcal{O}}\sum_{b\in \A}
\phi(o u(b))\left[f(\pi^{b,o}_{h,N}(u))-f(u)\right],
\end{equation}
for any  bounded function $f:\st \to \mathbb{R}$. Here, $\phi: \mathbb{R}\to [0,+\infty)$ is the jump rate function.

\begin{remark}
Note that for any $N\geq 2$ and for any initial list of social pressures, $(U_t^{N})_{t\in [0,+\infty)}$ will assume values in the set
$$
\left\{u=(u(a): a \in \A) \in \left\{\frac{h}{N}z: z \in \mathbb{Z}\right\}^N:\min\{|u(a)|:a \in \A\}=0\right\}
$$
after the first time in which all actors expressed at least one opinion on the network. For this reason, the state space of the model considered in \cite{galveslaxa} is
$$
\{u=(u(a): a \in \A) \in  \mathbb{Z}^N:\min\{|u(a)|:a \in \A\}=0\}.
$$
However, since we want to consider the limit behavior of the model when $N\to \infty$, we need to consider a state space that is independent of $N$, for each single social actor. 
\end{remark}



In what follows, we consider a realization of the process $(U_t^N)_{t \in [0,\infty)}$ driven by $(\mathbf{N}^{a,o}(ds,dz): a\in \A, o \in \opn)$, which is an i.i.d. family of Poisson random measures on $\R^+ \times \R^+$ having intensity $dsdz$. For any $a\in \A$,
\begin{equation}
\label{eq:finitemodel}
U_t^{N}(a)=U_0^{N}(a)-\sum_{o \in \mathcal{O}}\int_0^t \int_0^{+\infty} U_{s-}^{N}(a)\mathbf{1}\{z \leq \phi(o U_{s-}^{N}(a))\}\mathbf{N}^{a, o}(ds,dz)
\end{equation}
$$
+ \frac{h}{N}\sum_{o \in \mathcal{O}}\sum_{b\neq a}\int_0^t \int_0^{+\infty} 
o\mathbf{1}\{z \leq \phi(oU_{s-}^{N}(b))\}\mathbf{N}^{b,o}(ds,dz).
$$


This representation suggests that, 
in the case in which the initial social pressures of the actors are i.i.d random variables with distribution $g_0$, in the limit as $N\to \infty$, each actor should be the solution of the following differential equation
\begin{equation} \label{eq:limiteq}
U_t=U_0-\sum_{o \in \opn}\int_0^t \int_0^{+\infty} U_{s-}\mathbf{1}\{z \leq \phi(oU_{s-})\}\mathbf{N}^o(ds,dz)
+h\sum_{o\in \mathcal{O}}o \int_0^t \E[\phi(oU_{s})]ds,
\end{equation}
where $U_0\sim g_0$, and $\mathbf{N}^{+1}(ds,dz)$ and $\mathbf{N}^{-1}(ds,dz)$ are independent Poisson random measures  on $\R^+ \times \R^+$ having intensity $dsdz$.


The remainder of this article is devoted to the proof of the strong convergence of the process $(U_t^N(a))_{t\in [0,\infty)}$, represented in \eqref{eq:finitemodel}, to $(U_t)_{t\in [0,\infty)}$, defined in \eqref{eq:limiteq}, and to the study of the long time behavior of the limit process.

To state our main results, we need to introduce some notation. For $r\geq 0$, we put
\begin{equation} \label{eq:bigphi}
\Phi(r)\mydef \phi(-r)+\phi(r).    
\end{equation}
We also denote for $l\geq 0$,
\begin{equation} \label{eq:msmaller}
M^<(l) \mydef \sup\{ \Phi(r): r\in [0,l]\} 
\end{equation}
and
\begin{equation} \label{eq:mgreater}
M^>(l) \mydef \inf\{ \Phi(r): r> l\} 
. 
\end{equation}
Finally, in the following, we will denote for any $\lambda>0$, 
\begin{equation}\label{def:ptpoissonproc}
(P_t(\lambda))_{t\geq 0}
\end{equation}
a homogeneous Poison process with rate $\lambda>0$.

Before introducing our main results, we will present all the assumptions considered in our main results. Note that the assumptions are not used in all the results.

\begin{assumption} \label{as:phi1}
The function $\phi:\R\to \R^+$ is locally bounded. This means that $M^<(r)<\infty$, for any $r\geq 0$.
\end{assumption}

Assumption \ref{as:phi1} states that the function $\phi:\R\to \R^+$, that describes the jump rates of our model, is bounded on the interval $[0,r]$, for any $r\geq 0$. Note that we will not necessarily assume that $\phi:\R\to \R^+$ is continuous. 

\begin{assumption} \label{as:teo1}
$(U_0^{N}(a): a\in \A, N \geq 2)$ are i.i.d. random variables with finite mean.     
\end{assumption}


\begin{assumption} \label{as:phi2}
The function $\phi:\R\to \R^+$ 
is locally Lipschitz.
\end{assumption}

Clearly, if the function $\phi:\R\to \R^+$ satisfies Assumption \ref{as:phi2}, then it also satisfies Assumption \ref{as:phi1}.

\begin{assumption} \label{as:boundedandcont}
There exists $L>0$ such that
$\P(|U_0|\leq L)=1$. 
\end{assumption}



For the next assumptions, recall that  $\Phi: \R^+\to \R^+ $ is defined in \eqref{eq:bigphi}, in terms of the function $ \phi $.

\begin{assumption} \label{as:phi3}
The function $\phi:\R\to \R^+$ satisfies $\Phi(r)>0$, for all $r>0$.
\end{assumption}

\begin{assumption} \label{as:phi4}
For any $\lambda >0$,  $\phi:\R\to \R^+$ satisfies  
$$
\int_0^{\infty}\exp\left(-\lambda\int_0^s \Phi(r)dr\right)ds <\infty.
$$
\end{assumption}

\begin{assumption}\label{as:sym}
For any $x\in \R$, $\phi(x)=f(x)+B$, where $B>0$ and $f(x)$ is an odd function belonging to class $\mathcal{C}^1$, with $f'(x)$ strictly decreasing  in $[0,\infty)$ and satisfying 
$f(0)=0$, 
$f(x)\leq B
$ and $f'(x)>0$, for all $x\in \R$.
\end{assumption}

Note that if the function $\phi:\R\to \R^+$ satisfies Assumption \ref{as:sym} then it also satisfies Assumptions \ref{as:phi1}, \ref{as:phi3} and \ref{as:phi4}.

We can now state our main results. Their proofs are given in Sections \ref{sec:tight}, \ref{sec:sde}, \ref{sec:convergence} and \ref{sec:invariant} below.

Theorem \ref{teo:tight} is about the tightness of $((U_t^N(a))_{t\geq 0}:N\geq 2)$ for $a\in \A$ and the associated empirical measures.  $\mathbb{D}(\R^+)$ denotes the set of càdlàg functions on $\R^+$, which is endowed with the topology of the
Skorokhod convergence on compact time intervals (see \cite{jacshi}). $\mathcal{P}(\mathbb{D}(\R^+))$ denotes the set of probability measures on $\mathbb{D}(\R^+)$ endowed with the topology of weak convergence. 

\begin{theorem} \label{teo:tight} Under Assumptions \ref{as:phi1} and  \ref{as:teo1} the following holds.
\begin{enumerate}
\item For any $a\in \A$, $((U_t^N(a))_{t\geq 0}:N\geq 2)$ is a tight sequence of processes in $\mathbb{D}(\R^+)$.

\item The sequence of empirical measures $\mu_N=N^{-1}\sum_{a\in \A}\delta_{(U_t^N(a))_{t\geq 0}}$ is tight in $\mathcal{P}(\mathbb{D}(\R^+))$.
\end{enumerate}
\end{theorem}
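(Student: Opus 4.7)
\textbf{Proof plan for Theorem \ref{teo:tight}.} The plan is to prove part 1 by checking compact containment and Aldous's criterion in $\mathbb{D}(\R^+)$, and to deduce part 2 from part 1 using exchangeability. Indeed, under Assumption \ref{as:teo1} the initial configuration is i.i.d.\ and the generator \eqref{generator} is symmetric under permutations of the actors, so for each $N$ the joint law of $(U^N(a))_{a\in\A}$ is exchangeable; by a classical result on exchangeable sequences (see e.g.\ Sznitman's Saint-Flour lectures), tightness of $\mu_N$ in $\mathcal{P}(\mathbb{D}(\R^+))$ is then equivalent to tightness of the law of a single coordinate $(U_t^N(1))_{t\geq 0}$ in $\mathbb{D}(\R^+)$, which is precisely what part 1 establishes.

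For compact containment, I would rely on the a priori bounds proved in Section \ref{sec:priori}, in particular Theorem \ref{teo:bound}, which controls $\sup_{t\leq T}|U_t^N(a)|$ uniformly in $N$ by a quantity that does not depend on the initial list of social pressures. Combined with Assumption \ref{as:teo1}, this yields, for each $T>0$ and $\varepsilon>0$, a constant $L=L(T,\varepsilon)$ such that
\begin{equation*}
\sup_{N\geq 2}\mathbb{P}\Bigl(\sup_{s\leq T}\max_{b\in\A}|U_s^N(b)|>L\Bigr)<\varepsilon.
\end{equation*}
In particular, for each fixed $t$ the marginals $\{U_t^N(a):N\geq 2\}$ are tight in $\R$, giving the compact containment condition in $\mathbb{D}(\R^+)$.

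For Aldous's criterion, fix a stopping time $\tau_N\leq T$ and $\delta>0$, and decompose the increment $U_{\tau_N+\delta}^N(a)-U_{\tau_N}^N(a)$ into the sum of ``small'' $\pm h/N$ jumps induced by opinions of actors $b\neq a$ together with the contribution of at most one reset of $a$ itself. On the good event $\{\max_{b}\sup_{s\leq T+1}|U_s^N(b)|\leq L\}$, each actor expresses opinions at rate at most $M^<(L)$, so the expected total contribution of the small jumps is at most $(N-1)\cdot(h/N)\cdot M^<(L)\delta\leq h M^<(L)\delta$. A reset of $a$ occurs with probability at most $M^<(L)\delta$ and has amplitude at most $L$, contributing at most $L M^<(L)\delta$ in expectation. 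Markov's inequality then gives, for $\delta$ small enough and uniformly in $N$, $\mathbb{P}(|U_{\tau_N+\delta}^N(a)-U_{\tau_N}^N(a)|\geq\eta)<\varepsilon$, which is the required modulus estimate.

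The main obstacle is the control of the reset jumps: in contrast to the $h/N$-increments, they do not vanish as $N\to\infty$ and hence cannot be handled by a standard martingale/variance argument of the mean-field type. The key observation that makes Aldous's criterion go through is the combination of two facts: resets are rare on short windows (probability $O(\delta)$) and their amplitude is a priori bounded on the good event. Both ingredients rely crucially on the $N$-uniform estimates of Section \ref{sec:priori}, which is why Theorem \ref{teo:bound} is the essential prerequisite for this proof.
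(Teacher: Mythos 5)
Your overall strategy (Aldous's criterion for part 1, exchangeability plus Sznitman for part 2) is exactly the paper's, and the reduction of part 2 to part 1 is correct. However, there are two real gaps in the execution of part 1.

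First, you propose to rely on Theorem~\ref{teo:bound} for compact containment. That result requires the extra hypothesis $\lim_{l\to\infty}M^>(l)=+\infty$, which is not implied by Assumption~\ref{as:phi1} (e.g.\ if $\phi$ is bounded, $M^>$ is bounded too). Moreover, its ``bound independent of the initial condition'' is not needed here: under Assumption~\ref{as:teo1} the initial social pressure has finite mean, and Proposition~\ref{bound1} already gives the stochastic bound $\sup_{s\leq T}|U_s^N(1)|\leq|U_0^N(1)|+h+\tfrac{2h}{N}P_T(NM^<(2h))$, whose expectation is bounded uniformly in $N$. Markov's inequality then yields the compact containment for a single actor directly; that is what the paper does and it is all that Aldous's criterion requires.

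Second, the ``good event'' $\{\max_{b\in\A}\sup_{s\leq T+1}|U_s^N(b)|\leq L\}$ cannot be given probability close to $1$ uniformly in $N$: the single-actor bound above is not uniform over the $N$ actors, since the initial values $U_0^N(b)$ are only assumed i.i.d.\ with finite mean, so $\max_b|U_0^N(b)|$ is typically unbounded as $N\to\infty$, and a union bound inflates by a factor $N$. Conditioning on an event whose probability degenerates in $N$ defeats the purpose. The paper avoids this entirely: for the ``small'' $h/N$-jumps it uses exchangeability, $\E(Z_{S+\delta}^N-Z_S^N)=N\,\E(Z_{S+\delta}^N(1)-Z_S^N(1))$, so only a single-actor estimate is needed; for the reset term it first observes that the event $\{\text{reset contribution}>\epsilon\}$ is contained in $\{\text{at least one reset of actor 1 in }[S,S']\}$ (no amplitude bound is needed at all), and then controls $\E(\text{number of resets of actor 1})$ by splitting on $\{|U_s^N(1)|\leq l\}$ versus $\{|U_s^N(1)|>l\}$, bounding the first piece by $\delta M^<(l)$ and the second by $\tfrac{1}{l}\E\int_0^T|U_s^N(1)|\Phi(U_s^N(1))\,ds$ via Remark~\ref{coro1}, and finally choosing $l=l(\delta)\to\infty$ suitably as $\delta\to 0$. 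This per-actor truncation in $|U_s^N(1)|$, rather than a global $\sup$-bound over all actors, is the missing technical ingredient in your proposal.
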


We now turn to the well-posedness of the limit equation \eqref{eq:limiteq}.

\begin{theorem} \label{teo:sde}
Under Assumptions \ref{as:phi2} and \ref{as:boundedandcont} the following holds.
\begin{enumerate}
\item Path-wise uniqueness holds for the nonlinear SDE \eqref{eq:limiteq} in the class of processes $(U_t)_{t\geq 0}$ satisfying $|U_0|<L$, for any fixed $L>0$.
    
\item There exists a unique strong solution $(U_t)_{t\geq 0}$ of the SDE \eqref{eq:limiteq}  satisfying $|U_0|<L$, for any fixed $L>0$.
\end{enumerate}
\end{theorem}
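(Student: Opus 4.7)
The plan is to exploit the a priori bounds established in Section~\ref{sec:priori}. Under Assumption~\ref{as:boundedandcont}, any solution of \eqref{eq:limiteq} with $|U_0|\le L$ satisfies $|U_t|\le \psi_L(t)$ almost surely for a deterministic function $\psi_L$, a consequence of the reset mechanism combined with the locally bounded rate. Together with Assumption~\ref{as:phi2}, this means that on any compact interval $[0,T]$ the rate function $\phi$, restricted to the effective range $[-\psi_L(T),\psi_L(T)]$, is bounded by some $M_T$ and Lipschitz with constant $K_T$. All subsequent estimates are carried out on $[0,T]$ and then patched to $[0,\infty)$.

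For pathwise uniqueness I would couple two solutions $U,V$ of \eqref{eq:limiteq} driven by the same Poisson measures $\mathbf{N}^o$ with $U_0=V_0$. The essential computation is the $dz$-integral of the jump integrand, namely
\begin{equation*}
\int_0^{\infty}\bigl|U_{s-}\mathbf{1}\{z\le\phi(oU_{s-})\}-V_{s-}\mathbf{1}\{z\le\phi(oV_{s-})\}\bigr|\,dz
\le |U_{s-}-V_{s-}|\bigl(\phi(oU_{s-})\wedge\phi(oV_{s-})\bigr)+\bigl(|U_{s-}|\vee|V_{s-}|\bigr)\bigl|\phi(oU_{s-})-\phi(oV_{s-})\bigr|,
\end{equation*}
which separates the ``both jump'' contribution from the ``only one jumps'' contribution. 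Inserting the a priori bound and the local Lipschitz property of $\phi$, this is majorized by $C_T\,|U_{s-}-V_{s-}|$. Taking expectations against the Poisson compensator $dz\,ds$, combined with the estimate $|\E\phi(oU_s)-\E\phi(oV_s)|\le K_T\,\E|U_s-V_s|$ for the nonlinear drift, gives $\E|U_t-V_t|\le C'_T\int_0^t\E|U_s-V_s|\,ds$ on $[0,T]$, whence Gr\"onwall forces $U\equiv V$.

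For existence I propose a fixed-point argument on the mean-field profile. Given two continuous bounded functions $\alpha^{\pm}$ on $[0,T]$, the linearized equation obtained from \eqref{eq:limiteq} by substituting $\alpha^{o}(s)$ for $\E[\phi(oU_s)]$ is a standard Poisson-driven SDE with a bounded drift and a reset jump mechanism; its unique strong solution $U^{\alpha}$ is constructed jump-by-jump using the finiteness of the jump rate induced by the a priori bound. Define the map $T(\alpha^{+},\alpha^{-})(t):=(\E\phi(U^{\alpha}_t),\E\phi(-U^{\alpha}_t))$, restricted to the complete metric space of profiles bounded by $M_T$. The very same coupling estimate used for uniqueness, applied this time to $U^{\alpha}$ and $U^{\tilde\alpha}$, shows that $T$ is a contraction for horizon $T_0$ small enough, and iterating over successive intervals (the uniform a priori bound propagates, so the Lipschitz constants do not explode) produces the unique strong solution on $[0,\infty)$.

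The genuine obstacle is that the reset map $u\mapsto u\,\mathbf{1}\{z\le\phi(ou)\}$ is \emph{not} Lipschitz in $u$, so no classical Lipschitz SDE theorem applies off-the-shelf. The resolution is that once one integrates over the $z$-coordinate of the Poisson measure, the apparent discontinuity becomes benign: the measure of the symmetric difference of the two indicator sets is $|\phi(oU_{s-})-\phi(oV_{s-})|$, which is Lipschitz by Assumption~\ref{as:phi2}, while the prefactor $|V_{s-}|$ is controlled by Assumption~\ref{as:boundedandcont} and the a priori analysis. Both hypotheses of the theorem are thus indispensable, and the core of the argument is precisely the integrated estimate displayed above.
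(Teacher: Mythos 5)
Your uniqueness argument is sound and follows essentially the same route as the paper: couple two solutions driven by the same Poisson measures, bound the symmetric difference of the jump sets by the local Lipschitz property of $\phi$, control the prefactors via the deterministic a priori bound of Section~\ref{sec:priori}, and close with Gr\"onwall. The paper actually keeps the negative ``both jump'' term, which your bound discards, but that only changes constants and not the conclusion.

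For existence, however, there is a real gap. You define the map
$T(\alpha^{+},\alpha^{-})(t)=(\E\phi(U^{\alpha}_t),\E\phi(-U^{\alpha}_t))$
``restricted to the complete metric space of profiles bounded by $M_T$,'' where $M_T$ is the sup of $\phi$ on $[-\psi_L(T),\psi_L(T)]$. The trouble is that $\psi_L(T)$ comes from the a priori bound of Proposition~\ref{boundlimit1}, whose proof exploits the \emph{nonlinear} identity $\alpha^{o}(s)=\E\phi(oU_s)$ (so the drift is dominated by $h\,\E[\Phi(U_s)]$ and cancels the reset term when $|U_s|>2h$). For an \emph{arbitrary} profile $\alpha$ with $\|\alpha\|_\infty\le M_T$, the linearized solution only satisfies $|U^{\alpha}_t|\le L+2hM_T t$, which may well exceed $\psi_L(T)$; consequently $\E\phi(\pm U^{\alpha}_t)$ may exceed $M_T$ when $\phi$ is unbounded, and $T$ need not map your profile space into itself. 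The invariance is trivial if $\phi$ is globally bounded, but Assumption~\ref{as:phi2} only gives local boundedness, and this is exactly the obstruction your sketch does not resolve (the parenthetical ``the uniform a priori bound propagates'' asserts the conclusion without justifying the self-map property).

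The paper's remedy is a two-step scheme that your proposal omits. Step~1 proves existence by Picard iteration assuming $\phi$ is globally bounded and globally Lipschitz, where the a priori bound $|U^{[n]}_t|\le L+2hKT$ is immediate. Step~2 then replaces $\phi$ by the truncated function $\tilde\phi^{r}$ (equal to $\phi$ on $[-r,r]$ and constant outside), with $r=\gamma_T$; this $\tilde\phi^{r}$ is globally bounded and Lipschitz, so Step~1 applies; and Proposition~\ref{boundlimit1} together with Remark~\ref{remarkphitilde} shows the truncated solution never leaves $[-\gamma_T,\gamma_T]$, hence also solves the original equation. You invoke the ``effective range'' idea informally, but you never carry out this truncation, and without it the fixed-point map on profiles is not well-defined for locally Lipschitz, unbounded~$\phi$. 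If you insert the truncation of $\phi$ before setting up the fixed-point map, your profile argument becomes a legitimate alternative to the paper's Picard iteration (they are structurally equivalent: iterating on the drift profile is iterating on the process), but as written the existence part is incomplete.
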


Theorem \ref{teo:convergence} is about a quantified version of the convergence of the model \eqref{eq:finitemodel} to the limit equation \eqref{eq:limiteq}. It implies the propagation of chaos property.

\begin{theorem} \label{teo:convergence}
For $N\geq 2$, let $((U_t(a))_{t\geq 0}: a\in \A)$ be $N$ independent copies of \eqref{eq:limiteq} driven by $(\mathbf{N}^{a,o}: a \in \A, o \in \opn)$ with $U_0(a)=U_0^N(a)$, for $a \in \A$. Suppose that Assumptions  \ref{as:teo1}, \ref{as:phi2} and \ref{as:boundedandcont} hold. 
Then, for any $T>0$ there exists $C=C(L,T,h,\phi)>0$, such that for all $ t \le T $ and for any $a\in \A$,
$$
\E\left(\sup_{s\leq t}|U_s^N(a)-U_s(a)|\right)\leq CN^{-1/2}.
$$
In the above statement, the constant $C=C(L,T,h,\phi)$ is an increasing function of the support of the initial condition, $L,$ the interaction strength $h$ and the time horizon $T$, and it also depends on the function $\phi$.
\end{theorem}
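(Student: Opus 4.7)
The plan is to run a synchronous coupling in which both $U^N(a)$ from \eqref{eq:finitemodel} and its independent limit copy $U(a)$ from \eqref{eq:limiteq} are driven by the same Poisson measures $\mathbf{N}^{a,o}$. Setting $\Delta_s:=U_s^N(a)-U_s(a)$ and $V_t:=\E[\sup_{s\le t}|\Delta_s|]$, subtracting \eqref{eq:limiteq} from \eqref{eq:finitemodel} splits $\Delta$ into a reset contribution driven by $\mathbf{N}^{a,o}$ and a mean-field contribution driven by the other measures $\mathbf{N}^{b,o}$, $b\neq a$, compared against the deterministic limit drift $h\sum_o o\int_0^t\E[\phi(oU_s)]ds$. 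The goal is to prove an inequality of the form $V_t\le C\,N^{-1/2}+C'\int_0^t V_s\,ds$ and conclude by Grönwall.

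For the mean-field piece I perform the classical telescoping
\[
\frac{h}{N}\sum_{b\neq a}\phi(oU_{s-}^N(b))-h\,\E[\phi(oU_s)]=\frac{h}{N}\sum_{b\neq a}\bigl[\phi(oU_{s-}^N(b))-\phi(oU_{s-}(b))\bigr]+\frac{h}{N}\sum_{b\neq a}\bigl[\phi(oU_{s-}(b))-\E[\phi(oU_s)]\bigr]-\frac{h}{N}\E[\phi(oU_s)].
\]
By exchangeability and local Lipschitzness of $\phi$ on the range provided by the a priori bounds of Section \ref{sec:priori}, the first sum contributes a Grönwall-friendly term proportional to $V_s$; the second is a centered empirical mean of $N-1$ i.i.d.\ (bounded) variables, whose $L^2$ norm is $O(N^{-1/2})$; the third is $O(N^{-1})$. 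The associated compensated Poisson martingale has jumps of size $h/N$ and total rate $O(1)$, so Burkholder--Davis--Gundy yields $\E[\sup_{s\le t}|\,\cdot\,|]=O(N^{-1/2})$.

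The delicate part is the reset term, which I rewrite by adding and subtracting $U_{s-}^N(a)\mathbf{1}\{z\le\phi(oU_{s-}(a))\}$ to obtain the two pieces $(U_{s-}^N-U_{s-})\mathbf{1}\{z\le\phi(oU_{s-}^N)\}$ and $U_{s-}\bigl[\mathbf{1}\{z\le\phi(oU_{s-}^N)\}-\mathbf{1}\{z\le\phi(oU_{s-})\}\bigr]$. The compensator of the first integrates out to $|\Delta_s|\,\phi(oU_{s-}^N(a))$, which, after the a priori range control on $U^N(a)$, is linear in $|\Delta|$ and hence Grönwall-friendly. The second produces $|U_{s-}(a)|\cdot|\phi(oU_{s-}^N(a))-\phi(oU_{s-}(a))|$, a product of a potentially large factor $|U_{s-}(a)|$ with a local-Lipschitz factor whose Lipschitz constant itself depends on $|U_{s-}|\vee|U_{s-}^N|$; this is precisely the non-Lipschitz reset term flagged in the introduction.

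The key new ingredient, and the main obstacle, is a truncation at a carefully chosen level $R=R(N)$: on $\{|U_{s-}^N(a)|\vee|U_{s-}(a)|\le R\}$ local Lipschitzness on $[-R,R]$ bounds the problematic product by $R\,K_R\,|\Delta_s|$, giving another Grönwall-type term with coefficient $RK_R$; on the complement one invokes the a priori moment/tail estimates of Section \ref{sec:priori}, which exploit the exponential damping of $|U_s|$ induced by the resets (the Poisson process $P_t(\lambda)$ from \eqref{def:ptpoissonproc} enters here), to show that the residual is $o(N^{-1/2})$. The same truncation controls the martingale part of the reset via BDG, whose quadratic variation is a sum of $U_{s-}^2\cdot|\phi(oU_{s-}^N)-\phi(oU_{s-})|$-type terms. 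One must choose $R(N)\to\infty$ slowly enough that the tail contribution matches $N^{-1/2}$ yet the exponential constant $\exp(C R K_R T)$ stays bounded by a quantity depending only on $L,T,h,\phi$; this balance is the delicate new step. Combining all estimates gives the linear integral inequality for $V_t$ and Grönwall yields the announced rate $N^{-1/2}$.
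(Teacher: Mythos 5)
Your overall blueprint (synchronous coupling, splitting $\Delta_s$ into a reset piece and a mean-field piece, telescoping the mean-field term into an empirical fluctuation plus a martingale of order $N^{-1/2}$, Gr\"onwall at the end) matches the paper's strategy. Where the proposal goes astray is in the treatment of the non-Lipschitz reset term: your plan to truncate at a level $R=R(N)\to\infty$ ``slowly enough that the exponential constant $\exp(CRK_RT)$ stays bounded'' is self-defeating. If $\phi$ is only locally Lipschitz, $RK_R$ is nondecreasing in $R$ and diverges as $R\to\infty$, so the Gr\"onwall constant blows up along any sequence $R(N)\to\infty$; conversely, if you keep $RK_R$ bounded then $R$ must stay bounded. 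You cannot have both, so this balance cannot be made to work as stated.

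The paper resolves this with a crucially different choice: a \emph{fixed} truncation level $l^*(T)=2hTM^<(2h)+3h$, independent of $N$. The good event is not $\{|U^N_{s-}(a)|\vee|U_{s-}(a)|\le R\}$ but rather $\{hZ^N_s/N\le l^*(T)\}$, where $Z^N$ is the total jump count of the system; on that event, \eqref{eq:boundsocialandreset} and Assumption \ref{as:boundedandcont} give the uniform bound $|U^N_s(a)|\le L+l^*(T)$, and the limit process is already a priori bounded by $\gamma_t$. This converts the truncation into a Gr\"onwall term with an $N$-independent Lipschitz constant $C_{lip}(2L+l^*(T))$. The complementary ``bad'' event is controlled not by a polynomially decaying tail but by Poisson concentration for $Z^N_T$ (Proposition \ref{bound1} dominates $Z^N_T$ stochastically by $N+2P_T(NM^<(2h))$), which yields $\P(hZ^N_T/N>l^*(T))\le e^{-c_TN}$; the remaining terms involving $|U^N_s(a)|\Phi(U^N_s(a))$ on the bad event are then handled by a second trick, rewriting the drift integral as an expectation of a Poisson stochastic integral and re-using \eqref{eq:boundsocialandreset}. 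Because $e^{-c_TN}\ll N^{-1/2}$, the bad event contributes negligibly, and the fixed $l^*(T)$ plus exponential tail give exactly the balance you were looking for. You had all the right ingredients nearby (coupling, the a priori bounds, the role of the Poisson tails), but the indispensable insight — truncate at a fixed level indexed by $Z^N/N$, not at an $N$-dependent level in state space — is missing, and without it the argument does not close.
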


Finally, Theorem \ref{teo:invariant} deals with the long-time behavior of the limit process. We first study the invariant states of the limit equation \eqref{eq:limiteq} under general conditions on the jump rate. For a jump rate satisfying the more restrictive Assumption \ref{as:sym}, we then prove that the limit equation exhibits a phase transition.

\begin{theorem} \label{teo:invariant}
\begin{enumerate}
\item[]
\item Under Assumption \ref{as:phi3}, $\delta_0$ is an invariant probability measure of \eqref{eq:limiteq}.

\item Under Assumptions \ref{as:phi3} and \ref{as:phi4}, any non-null invariant probability measure of \eqref{eq:limiteq} has either support in $\R^+$ or support in $\R^-$.

\item Under Assumptions \ref{as:phi3} and \ref{as:phi4},
any non-null invariant probability measure of \eqref{eq:limiteq} with support in $\R^+$
has the form
$g(dx)=g(x)dx$, with $g: \R^+\to \R^+$ given by
$$ 
g(x)=
g_{\gamma}(x)=\exp\left(-\frac{1}{\gamma h}\int_0^{x} \Phi(s)ds\right) \Big/ \int_0^{\infty}\exp\left(-\frac{1}{\gamma h}\int_0^t \Phi(s)ds\right)dt,
$$
where $\gamma>0$ satisfies $\int_0^{\infty}(\phi(x)-\phi(-x))g_{\gamma}(dx)=\gamma>0$ and $\int_0^{\infty}g_{\gamma}(dx)=1$. 

\item Under Assumptions \ref{as:phi3} and \ref{as:phi4}, $g(dx)=g(x)(dx)$ is an invariant probability measure of \eqref{eq:limiteq} with support in $\R^+$ if and only if $\tilde{g}(dx)=\tilde{g}(x)(dx)$, given by $\tilde{g}(x)=g(-x)$, is an invariant probability measure of \eqref{eq:limiteq} with support in $\R^-$.

\item If Assumption \ref{as:sym} holds, then: 

If $h \leq  B/f'(0)$, then the SDE \eqref{eq:limiteq} has $\delta_0$ as unique invariant probability measure. 

If $h> B/f'(0)$, the SDE \eqref{eq:limiteq} has three invariant probability measures: $\delta_0$, an invariant probability measure supported in $\R^+$ and other supported in $\R^-$. 
\end{enumerate}
\end{theorem}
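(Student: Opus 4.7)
The plan is to prove the five assertions in order. Parts 1 and 4 are essentially immediate. For part 1, the constant trajectory $U_t \equiv 0$ solves \eqref{eq:limiteq}: the reset integrand $U_{s-}\mathbf{1}\{z \le \phi(oU_{s-})\}$ vanishes identically, and the drift $h(\E[\phi(U_s)] - \E[\phi(-U_s)])$ is zero when $U_s \equiv 0$; hence $\delta_0$ is invariant. Part 4 follows from the sign-flip symmetry of \eqref{eq:limiteq}: if $(U_t)$ solves the equation driven by $(\mathbf{N}^{+1},\mathbf{N}^{-1})$, then $(-U_t)$ solves it driven by $(\mathbf{N}^{-1},\mathbf{N}^{+1})$, so $g$ is invariant if and only if $\tilde g(\cdot):=g(-\cdot)$ is.

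For part 2, let $\mu$ be an invariant probability and set $\gamma := \int(\phi(x)-\phi(-x))\,\mu(dx)$. In the stationary regime, $\E[\phi(\pm U_s)]$ is constant in $s$, so the drift in \eqref{eq:limiteq} equals the constant $h\gamma$, and between consecutive reset times the trajectory is affine with slope $h\gamma$. If $\gamma = 0$, then from any starting point $u\neq 0$ a reset occurs in finite time almost surely (since $\Phi(u)>0$ by Assumption \ref{as:phi3}), after which the trajectory is frozen at $0$; invariance then forces $\mu = \delta_0$, contradicting non-nullness. If $\gamma > 0$, every trajectory is non-negative immediately after any reset, so $\mathrm{supp}(\mu) \subset [0,\infty)$; symmetrically $\gamma < 0$ gives support in $(-\infty,0]$.

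For part 3, on $(0,\infty)$ the invariant density $g$ satisfies the stationary Kolmogorov forward equation for the piecewise-deterministic motion with drift $h\gamma$ and reset-to-zero rate $\Phi(x)$, namely
\begin{equation*}
h\gamma\, g'(x) + \Phi(x)\, g(x) = 0, \qquad x > 0,
\end{equation*}
together with the jump-flux balance at the origin. Integrating this ODE gives $g(x) \propto \exp\bigl(-\tfrac{1}{\gamma h}\int_0^x \Phi(s)\,ds\bigr)$, and the normalizing constant is finite by Assumption \ref{as:phi4}. Rigorously, this is obtained from the invariance identity $\int \mathcal{L} f\,d\mu = 0$ for a suitable family of test functions, where $\mathcal{L}$ is the generator of \eqref{eq:limiteq}; the self-consistency condition $\gamma = \int_0^\infty(\phi(x)-\phi(-x)) g_\gamma(x)\,dx$ then closes the system.

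For part 5, Assumption \ref{as:sym} gives $\Phi(x)\equiv 2B$ and $\phi(x)-\phi(-x)=2f(x)$, so $g_\gamma$ is the exponential density with mean $m := \gamma h/(2B)$ and the self-consistency equation reduces to
\begin{equation*}
G(m) := \int_0^\infty f(mu)\, e^{-u}\,du = \frac{B}{h}\, m.
\end{equation*}
Since $f'$ is strictly decreasing on $[0,\infty)$, $f$ is strictly concave with $f(0)=0$, so $x\mapsto f(x)/x$ is strictly decreasing on $(0,\infty)$; writing $G(m)/m = \int_0^\infty u\,\bigl(f(mu)/(mu)\bigr) e^{-u}\,du$ shows that $m\mapsto G(m)/m$ is strictly decreasing on $(0,\infty)$, with limits $f'(0)$ as $m\downarrow 0$ (dominated convergence, using $f(x)\le f'(0)\,x$) and $0$ as $m\to\infty$ (using $f\le B$). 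Hence $G(m)/m = B/h$ has a unique positive solution if and only if $B/h < f'(0)$, i.e.\ $h > B/f'(0)$; combined with parts 1--4 this yields the three invariant measures in the supercritical regime and only $\delta_0$ otherwise. The main technical hurdle is the rigorous justification of the forward equation in part 3 given the non-Lipschitz reset dynamics; the phase-transition analysis in part 5 then boils down to the clean one-dimensional fixed-point argument above.
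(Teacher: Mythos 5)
Your Parts 1, 2, 4 and 5 follow essentially the same route as the paper. Part 1 is the trivial-trajectory observation, Part 4 is the sign-flip symmetry, Part 2 is the observation that in the stationary regime the drift is the constant $h\gamma$ so after any reset the trajectory has a fixed sign. In Part 5 you reformulate the paper's fixed-point equation $\int_0^\infty e^{-u}f\bigl(\tfrac{uh\gamma}{2B}\bigr)\,du = \tfrac{\gamma}{2}$ as $G(m)/m = B/h$ with $m=\gamma h/(2B)$ and exploit strict concavity of $f$ through the fact that $x\mapsto f(x)/x$ is strictly decreasing, giving a marginally cleaner monotone-crossing argument than the paper's direct differentiation of the left-hand side in $\gamma$; the two are mathematically equivalent.

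Part 3 is where you genuinely diverge and where the gap lives. You posit that the invariant measure has a density solving the stationary forward equation $h\gamma\,g'(x)+\Phi(x)g(x)=0$ on $(0,\infty)$, integrate, and defer the justification to ``the invariance identity $\int \mathcal{L}f\,d\mu=0$ for a suitable family of test functions.'' That defers precisely the hard steps: (i) that an invariant probability of the nonlinear equation \eqref{eq:limiteq} with support in $\R^+$ is necessarily absolutely continuous (there is no ellipticity here, so this is not automatic), and (ii) that, once $\gamma$ is fixed, the invariant probability of the resulting linear dynamics is unique, which is what lets you conclude that \emph{every} such invariant measure is $g_\gamma$. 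The paper handles both at once by introducing the auxiliary SDE \eqref{eq:auxprocess} with constant drift $h\gamma$, proving it is a well-posed piecewise-deterministic process for which the origin is positive recurrent under Assumption \ref{as:phi4}, and then computing the (unique) invariant law via the renewal/occupation formula $\mu_\gamma(A)=\E(\tau_0^\gamma)^{-1}\,\E\bigl[\int_0^{\tau_0^\gamma}\mathbf{1}\{Y^0_s\in A\}\,ds\bigr]$, which produces the density \eqref{eq:invariantmeasure} directly, with no PDE and no regularity argument needed. Your ODE route would reach the same formula, but as written it presupposes the density rather than producing it; to make it rigorous you would need to add an argument for absolute continuity and for uniqueness of the fixed-$\gamma$ invariant law, at which point you would essentially be reconstructing the paper's renewal argument.
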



\begin{remark}
   Note that $\delta_0$ is an invariant probability measure for \eqref{eq:limiteq} because in the case
$$
\E(\phi(U_s))=\E(\phi(-U_s)),
$$
for all $s\geq 0$, corresponding to the case $\gamma=0$, \eqref{eq:limiteq} is a process that remains at $U_0$ until the first jump of the process, which is an exponential random time, and after the first jump time the process gets trapped at $0$. 
\end{remark}

\begin{remark}
From a modeling point of view, it is reasonable to assume that $\phi:\R\to \R^+$ is non decreasing: the greater the social pressure in a certain direction, the greater the tendency of the social actor to express an opinion corresponding to that direction. However, our results are more general. In the case in which $\phi:\R\to \R^+$ is non decreasing, Assumption \ref{as:phi1} always holds.
\end{remark}

Let us comment on the proof of our main results.
We follow the \textit{coupling} approach 
to prove propagation of chaos. This approach goes back to the seminal work of \cite{sznit} and has been successfully implemented to study mean field limits for systems of spiking neurons (see \cite{evafour} and \cite{evamonm}). We need to consider different techniques to deal with the marks on the Poisson processes. Moreover, we need to introduce an adhoc truncation procedure to deal with the fact that we do not assume that $\phi$ is globally Lipschitz to prove the quantified propagation of chaos. 

The proof of part 2 of Theorem \ref{teo:sde} uses a classical Picard iteration argument very much in the spirit of the schemes introduced in \cite{evadasha} and \cite{evadashaerny}. To deal with the case in which $\phi$ is not globally Lipschitz, we first prove the result in the globally Lipschitz case. In a second step, we then put this together with the a priori bounds of Section \ref{sec:priori} to conclude the proof in the local Lipschitz case.

The proof of Theorem \ref{teo:convergence} uses an important truncation trick. We divide the terms we want to control into two cases corresponding to whether the absolute value of the social pressure of an actor is smaller than an arbitrary threshold $l>0$ or not.  For the first case, we can easily obtain a control since the social pressures are bounded, relying on the bounds for the limit process contained in Section \ref{sec:priori}. To deal with the values exceeding $l, $ we have to carefully choose $l$ and to rely on the a priori bounds established in Section \ref{sec:priori}. 

The a priori bounds presented in Section \ref{sec:priori} to bound the social pressure of an actor and other related quantities are proven in an original way by bounding the total number of jumps of the system. Similar bounds for system of spiking neurons on the same class are presented in \cite{gl4,ostduarte1} and \cite{evafour}.

\subsection{Phase transition}

Part 5 of Theorem \ref{teo:invariant} states that under Assumption \ref{as:sym}, the limit equation has two different behaviors according to the value of $h$, having only the null Dirac measure as invariant measure if $h$ is smaller than a given threshold, and having, in addition to the null Dirac measure, two nontrivial invariant probability
measures which are given by Part 3 of Theorem \ref{teo:invariant} and which are symmetric from each other.

In particular, considering $\phi(r)=1+tanh(r)$, where $tanh$ is the hyperbolic tangent function, this function $\phi:\R\to \R^+$ clearly satisfies Assumption \ref{as:sym} with $B=1$ and $f'(0)=1$. Therefore, the limit SDE \eqref{eq:limiteq} has a phase transition as described in Theorem \ref{teo:invariant} with threshold equal to $1$. 

As a consequence, the time evolution of the system \eqref{eq:finitemodel} for a sufficiently large number of social actors exhibits different behaviors depending on the value of $h$. If $h<1$, starting from an initial list in which the social pressure of all actors has the same sign, the mean social pressure of the system quickly approaches $0$, remaining close to $0$. If $h>1$, starting from the null list social pressure, either the mean social pressure of the system quickly approaches a given positive value or it quickly approaches a negative value having the same height (in absolute value). Figure \ref{fig:sim1} presents simulations displaying this behavior. 

\begin{figure}[h] 
\centering
\includegraphics[width=7cm]{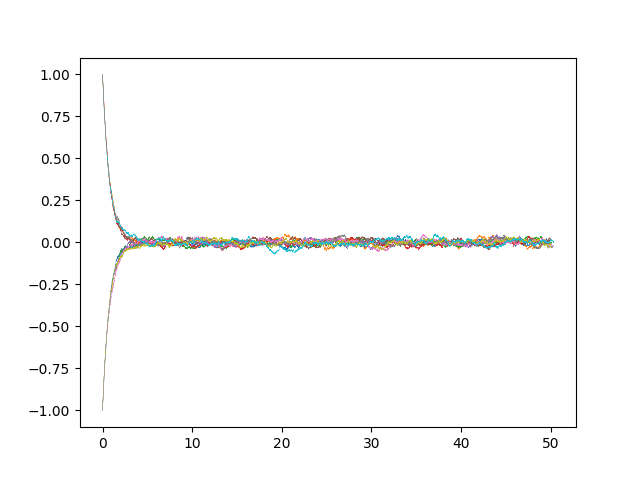} \hfill
\includegraphics[width=7cm]{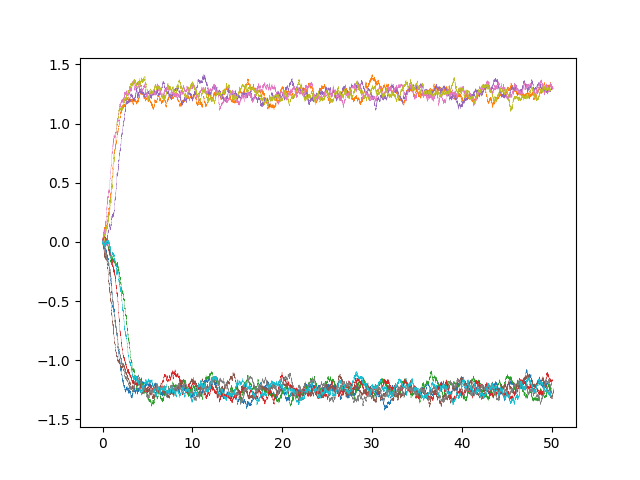}
\caption{Simulation of the system \eqref{eq:finitemodel} with $N=1000$ and $\phi(r)=1+tanh(r)$. The system \eqref{eq:finitemodel} was simulated $10$ times with  $h=0.5$ and $10$ times with $h=2$. When $h=0.5$, we chose the initial list of social pressures $U_0^N \equiv 1$ on half of the simulations and $U_0^N \equiv -1$ on the other half. When $h=2$, $U_0^N \equiv 0$. The time evolution of the mean social pressure of each simulation are the different plots of the figure (in the left for $h=0.5$ and in the right for $h=2$).}
\label{fig:sim1}
\end{figure}

In the case in which $h$ is exactly the threshold ($h=1$), the behavior of the system is  different from the behaviors displayed in Figure \ref{fig:sim1}. Starting from an initial list in which the
social pressure of all actors has the same sign, the mean social pressure of the system quickly
approaches a neighborhood of $0$, but differently from the case $h<1$, the mean social pressure of the system fluctuates around $0$. Figure \ref{fig:hthreshold} presents simulations displaying this behavior.

\begin{figure}[h] 
\centering
\includegraphics[width=7cm]{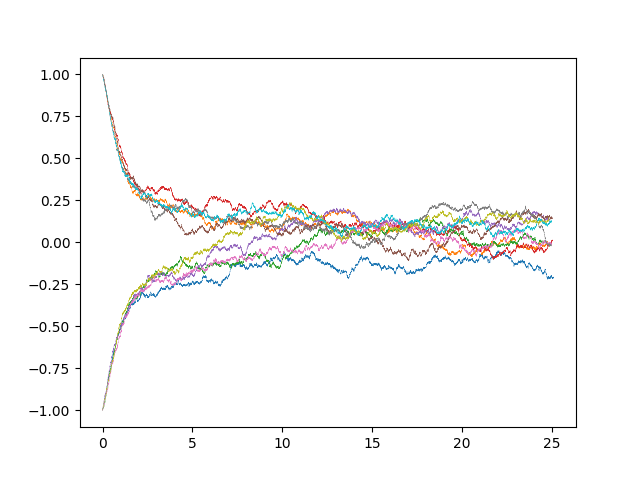}
\caption{Simulation of the system \eqref{eq:finitemodel} with $N=1000$, $\phi(r)=1+tanh(r)$ and $h=1$. The system \eqref{eq:finitemodel} was simulated $10$ times and the initial list of social pressures $U_0^N \equiv 1$ on half of the simulations and $U_0^N \equiv -1$ on the other half.  The time evolution of the mean social pressure of each simulation are the different plots of the figure.}
\label{fig:hthreshold}
\end{figure}

Assumption \ref{as:sym} is clearly not necessary for Part 3 of Theorem \ref{teo:invariant} to hold. Indeed, by considering  $\phi(r)=e^r$, a function that does not satisfy Assumption \ref{as:sym}, the time evolution of the system \eqref{eq:finitemodel} for a sufficiently large number of social actors exhibits the same different behaviors, with the same threshold for $h$. Figure \ref{fig:sim2} presents simulations displaying this behavior.
Even though we can not prove it, we therefore believe that Part 3 of Theorem \ref{teo:invariant} holds for a wider class of functions $\phi:\R\to \R^+$.

\begin{figure}[H] 
\centering
\includegraphics[width=7cm]{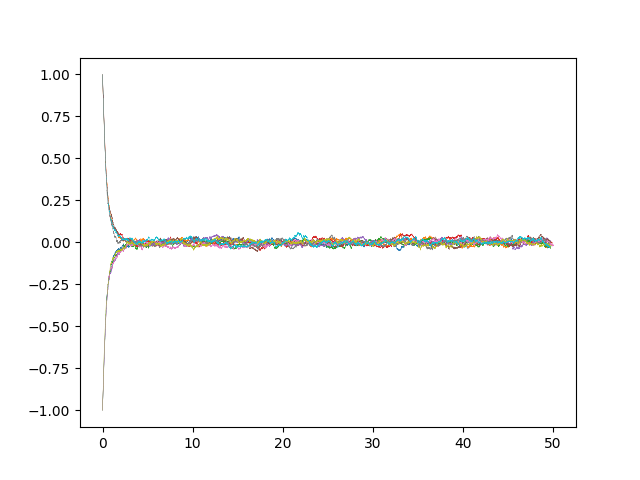} \hfill
\includegraphics[width=7cm]{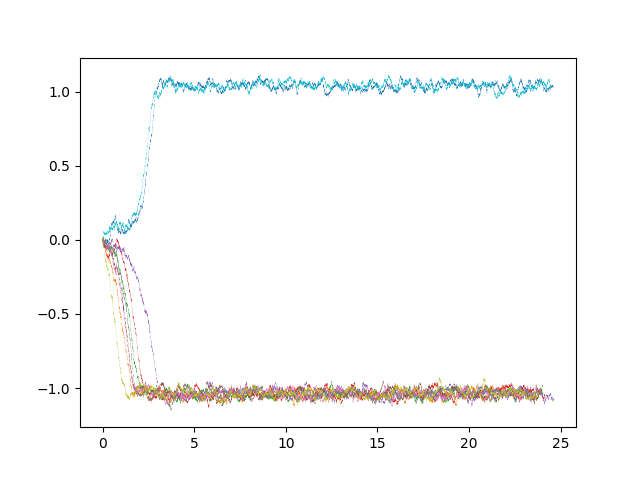}
\caption{Simulation of the system \eqref{eq:finitemodel} with $N=1000$ and $\phi(r)=e^r$. The system \eqref{eq:finitemodel} was simulated $10$ times with  $h=0.5$ and $10$ times with $h=2$. When $h=0.5$, we chose the initial list of social pressures $U_0^N \equiv 1$ on half of the simulations and $U_0^N \equiv -1$ on the other half. When $h=2$, $U_0^N \equiv 0$. The time evolution of the mean social pressure of each simulation are the different plots of the figure (in the left for $h=0.5$ and in the right for $h=2$).}
\label{fig:sim2}
\end{figure}

\section{A priori bounds} \label{sec:priori}
In this Section we provide upper bounds  for $|U_t^N (a) |$ defined in \eqref{eq:finitemodel}, $|U_t|$ defined in \eqref{eq:limiteq} and other related quantities. These upper bounds will be an important tool to prove our results. 
In particular, Theorem \ref{teo:bound}, stated in Subsection \ref{subsec:teobound}, gives an upper bound for the social pressure of an actor not depending on the initial list of social pressures.

\subsection{Bounding the number of jumps of the system}

For any $N\geq 2$ and for any $t\geq 0$, let
\begin{equation}\label{eq:ztndefinition}
Z_t^N\mydef
\sum_{a\in \A}\sum_{o\in \opn}\int_0^t \int_0^{+\infty}\mathbf{1}\{z \leq \phi(o U_{s-}^{N}(a))\}\mathbf{N}^{a, o}(ds,dz)
\end{equation}
be the total number of jumps of the system with $N$ actors until time $t$. 

To upper-bound  $|U^N_t(a)|$, note that from \eqref{eq:finitemodel} it follows that
$$
|U_t^{N}(a)|\leq |U_0^{N}(a)|-\sum_{o \in \mathcal{O}}\int_0^t \int_0^{+\infty} |U_{s-}^{N}(a)|\mathbf{1}\{z \leq \phi(o U_{s-}^{N}(a))\}\mathbf{N}^{a, o}(ds,dz)
$$
$$
+ \frac{h}{N}\sum_{o \in \mathcal{O}}\sum_{b\neq a}\int_0^t \int_0^{+\infty} 
\mathbf{1}\{z \leq \phi(oU_{s-}^{N}(b))\}\mathbf{N}^{b,o}(ds,dz).
$$
Moreover, note that
$$
\sum_{o \in \mathcal{O}}\sum_{b\neq a}\int_0^t \int_0^{+\infty} 
\mathbf{1}\{z \leq \phi(oU_{s-}^{N}(b))\}\mathbf{N}^{b,o}(ds,dz) \leq Z_t^N.
$$
Since $ h > 0, $ this implies that
\begin{equation}
\label{eq:boundsocialandreset}
|U_t^N(a)|+ \sum_{o \in \opn}\int_0^t \int_0^{+\infty} |U_{s-}^{N}(a)|\mathbf{1}\{z \leq \phi(o U_{s-}^{N}(a))\}\mathbf{N}^{a, o}(ds,dz) \leq |U_0^N(a)|+\frac{h}{N}Z_t^N.
\end{equation}
Therefore, in order to upper bound $|U_t^N(a)|$ and the other term on the left-hand side of \eqref{eq:boundsocialandreset}, it is sufficient to find an upper bound for $Z_t^N$. This is done in the next proposition. Recall that $(P_t(\lambda))_{t\geq 0}$ is defined in \eqref{def:ptpoissonproc} for any $\lambda>0$. In the following, for any random variables $X$ and $Y$ assuming values in $\R^+$, we will say that $X \leq Y$ for the usual stochastic order if 
$$
\P(X>r) \leq \P(Y>r),  \text{ for any } r\in \R^+.
$$

\begin{proposition} \label{bound1}
Assume that $M^<(2h)<\infty$. Let $N\geq 2$ and $a\in \A$. For any $t\geq 0$, the following inequalities hold for the usual stochastic order.
\begin{enumerate}
    \item 
$Z_t^{N} \leq N+2\times P_t( N\times M^<(2h))$. 
\item
$\displaystyle 
     \sup_{s\leq t}|U_t^{N}(a)|\leq |U_0^N(a)|+h +\frac{2h}{N}\times P_t(N\times M^<(2h)).$
\item 
$\displaystyle 
\sum_{o \in \opn}\int_0^t \int_0^{+\infty} |U_{s-}^{N}(a)|\mathbf{1}\{z \leq \phi(o U_{s-}^{N}(a))\}\mathbf{N}^{a, o}(ds,dz) \leq$
$$ \quad \quad \quad \quad 
|U_0^N(a)|+h+\frac{2h}{N}\times P_t(N\times M^<(2h)).
$$
\end{enumerate}
\end{proposition}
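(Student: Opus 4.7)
The plan is to prove (1) first, since (2) and (3) then follow immediately from the pathwise inequality \eqref{eq:boundsocialandreset}. For (1), I would decompose jumps according to the magnitude of the pressure of the jumping actor: call a jump \emph{small} when the jumping actor $a$ satisfies $|U_{s-}^N(a)|\le 2h$ and \emph{large} otherwise, with respective counting processes $L_t$ and $H_t$, so that $Z_t^N=L_t+H_t$.

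The small jumps are handled directly: their compensator is bounded by $\int_0^t \sum_{a}\Phi(|U_{s-}^N(a)|)\mathbf{1}\{|U_{s-}^N(a)|\le 2h\}\,ds \le N\,M^<(2h)\,t$ by the definition of $M^<$, so a standard thinning/coupling argument gives that $L_t$ is stochastically dominated by $P_t(N\,M^<(2h))$. To control the large jumps I would introduce the Lyapunov potential $Y_t=\sum_{a\in\A}|U_t^N(a)|$. At a jump by actor $a$ with opinion $o$,
$$
\Delta Y_t = -|U_{t-}^N(a)| + \sum_{b\neq a}\bigl(|U_{t-}^N(b)+oh/N|-|U_{t-}^N(b)|\bigr),
$$
and the second term is bounded by $(N-1)h/N < h$ by the triangle inequality. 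Hence each large jump dissipates $Y_t$ by more than $h$, while each small jump charges it by at most $h$. Telescoping and using $Y_t\ge 0$ yields $h\,H_t \le Y_0 + h\,L_t$, and therefore
$$
Z_t^N = H_t + L_t \le \frac{Y_0}{h} + 2L_t,
$$
which together with the Poisson domination of $L_t$ gives the form of (1), after controlling $Y_0/h$ by $N$ under the initial normalization implicitly in force (e.g., $|U_0^N(a)|\le h$ for every $a$).

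Once (1) is established, (2) and (3) follow at once. Indeed, \eqref{eq:boundsocialandreset} bounds both $\sup_{s\le t}|U_s^N(a)|$ and the reset integral on its left-hand side by $|U_0^N(a)|+(h/N)Z_t^N$, and substituting the stochastic bound from (1) produces $|U_0^N(a)|+h+(2h/N)P_t(N M^<(2h))$. The main obstacle is thus the control of $H_t$, whose rate can be arbitrarily large in state: the Lyapunov argument is the key mechanism because each reset discharges more of the potential $Y_t$ than the perturbations can accumulate in a cycle, so the reset mechanism of the model is what closes the estimate; the delicate point is calibrating the threshold ($2h$) and the potential so that the dissipation-versus-accumulation bookkeeping recovers exactly the constants in the statement.
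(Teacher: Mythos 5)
Your Lyapunov/potential route is a genuinely different strategy from the paper's, and the telescoping is executed correctly as far as it goes: at a jump where $|U_{s-}^N(a)|>2h$, indeed $\Delta Y_s\le -|U_{s-}^N(a)|+(N-1)h/N<-h$, at a small jump $\Delta Y_s\le (N-1)h/N<h$, and $Y_t\ge 0$ then gives $hH_t\le Y_0+hL_t$. The paper instead argues combinatorially: it shows that on the event $\{Z_t^{>,N}=n(N+1)\}$, by pigeonhole some actor must have had at least two ``large'' jumps in each of $n$ subintervals, and between two consecutive large jumps of the \emph{same} actor the reset forces its pressure to rebuild from $0$ past $2h$, which costs at least $2N$ jumps by the others; this yields $Z_t^{>,N}\le N+Z_t^{<,N}$ pathwise, with a constant $N$ that is completely independent of the initial condition.

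The gap in your proposal is exactly the last step you flag yourself: you need $Y_0/h\le N$, and you attribute this to ``the initial normalization implicitly in force,'' but no such normalization exists. Proposition~\ref{bound1} assumes only $M^<(2h)<\infty$; it is stated for arbitrary initial lists of social pressures (Parts~2 and~3 even carry $|U_0^N(a)|$ explicitly in the bound, and Assumption~\ref{as:teo1} only controls the \emph{mean} of the i.i.d.\ initial pressures, not a pathwise $\ell^1$ bound of size $Nh$). So $Y_0=\sum_a|U_0^N(a)|$ can be arbitrarily large and your bound $Z_t^N\le Y_0/h+2L_t$ does not reduce to $N+2L_t$. The natural repair is to truncate the potential, e.g.\ $\tilde Y_t=\sum_a\min(|U_t^N(a)|,2h)$, which gives $\tilde Y_0\le 2Nh$ while preserving dissipation at least $2h-(N-1)h/N$ per large jump and accumulation at most $(N-1)h/N$ per small jump; but the resulting constant is roughly $2N$, not $N$. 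To recover the sharper constant $N$ of the statement one seems to need the pigeonhole observation that the only large jumps occurring ``for free'' are at most one per actor, which is a counting fact the potential bookkeeping does not see.
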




To prove Proposition \ref{bound1}, we introduce the following notation.
For any $N\geq 2$, $t\geq 0$, $a\in \A$ and $o\in \opn$, let
$$
Z_t^{>,N}(a,o)\mydef
\int_0^t \int_0^{+\infty}\mathbf{1} \{|U_{s-}^{N}(a)|>2h\}\mathbf{1}\{z \leq \phi(o U_{s-}^{N}(a))\}\mathbf{N}^{a, o}(ds,dz),
$$
$$
Z_t^{<,N}(a,o)\mydef
\int_0^t \int_0^{+\infty}\mathbf{1} \{|U_{s-}^{N}(a)|\leq 2h\}\mathbf{1}\{z \leq \phi(o U_{s-}^{N}(a))\}\mathbf{N}^{a, o}(ds,dz),
$$
and
$$
Z_t^N(a,o)\mydef Z_t^{>,N}(a,o)+Z_t^{<,N}(a,o).
$$
Note that
$$
Z_t^N = \sum_{a\in \A}
\sum_{o \in \opn} Z_t^N(a,o).
$$
Analogously, we define $Z_t^{<,N}$, $Z_t^{>,N}$, $Z_t^{<,N}(a)$, $Z_t^{>,N}(a)$ and $Z_t^N(a)$.

For any $t>0$, the equality
$Z_t^N=Z_t^{>,N}+Z_t^{<,N}$ means that the opinions expressed on the system until time $t$ can be divided in two groups according to the value of the social pressure of the social actor that expressed the opinion just before the time in which the opinion was expressed. Either the absolute value of this social pressure is
smaller or equal than the threshold $2h$ (represented by $Z_t^{<,N}$ and named as jumps below $2h$)
or the absolute value of this social pressure is greater than the threshold $2h$ (represented by $Z_t^{>,N}$ and named as jumps above $2h$).

Now we will prove Proposition \ref{bound1}.

\begin{proof}
To prove Part 1 of Proposition \ref{bound1}, first note that the inequality 
$Z_t^{<,N} \leq  P_t( N\times M^<(2h))$ holds for the usual stochastic order by the basic properties of homogeneous Poisson processes. 
Therefore, to prove Part 1 of Proposition \ref{bound1} we need to prove that the following inequality holds 
\begin{equation}
Z_t^{>,N} \leq N+Z_t^{<,N}. \label{eq:bigjumpsbound}
\end{equation}

To prove \eqref{eq:bigjumpsbound}, we will first show that for any $t>0,$
$$
\{Z_t^{>,N} = N+1\} \subset \{Z_t^{<,N} \geq  N+1\} .
$$
Note that
$\{Z_t^{>,N} = N+1\}$ implies that there exists $a\in \A$ such that $Z_t^{>}(a)\geq 2$. 
Let $0< s_1 < s_2 < t$ be times of two jumps above $2h$ of actor $a$. We have that $U_{s_1}^{N}(a)=0$ and $|U_{s_2-}^{N}(a)|\geq 2h$, which implies that
$$
\sum_{b\neq a}\left(Z_{s_2-}^N(b)-Z_{s_1}^N(b)\right) \geq 2N.
$$
Therefore, until time $t$ the actor $a$ expressed an opinion at least twice and the number of times the other actors expressed opinions is at least $2N$. 
This implies that
$$
Z_t^N \geq 2N+2
$$
and then,
$$
Z_t^{<,N} \geq (2N+2)-(N+1)=N+1.
$$

In general, for $n\geq 1$ and for any $t>0$, $\{Z_t^{>,N} = n(N+1)\}$ implies that there exist $0=t_0< t_1 <\ldots <t_n=t$ such that for any $k=1,\ldots, n$, 
$$
Z^{>,N}_{t_k} - Z^{>,N}_{t_{k-1}} = N+1.
$$
This implies that there exists a sequence of actors $a_1,\ldots, a_n$ such that for any $k=1,\ldots, n$,
$$
Z_{t_k}^{>,N}(a_k)-Z_{t_{k-1}}^{>,N}(a_k)\geq 2.
$$
Using the same arguments as above for the case $n=1$, we conclude that 
for any $k=1,\ldots, n$,
$$
Z_{t_k}^N-Z_{t_{k-1}}^N\geq 2N+2
$$
and 
$$
Z_{t_k}^{<,N}-Z_{t_{k-1}}^{<,N}\geq N+1.
$$
Finally, this implies that
$$
Z_t^{<,N}=\sum_{k=1}^n\left(Z_{t_k}^{<,N}-Z_{t_{k-1}}^{<,N}\right) \geq n(N+1).
$$
We conclude that, 
for any $n=1,2,...$ and for any $t>0$
$$
\{Z_t^{>,N} = n(N+1)\} \subset \{Z_t^{<,N} \geq  n(N+1)\}.
$$
Therefore, \eqref{eq:bigjumpsbound} holds, finishing the proof of part 1 of Proposition \ref{bound1}.
Parts 2 and 3 of Proposition \ref{bound1} follow directly by Part 1 of Proposition \ref{bound1} and \eqref{eq:boundsocialandreset}.

\end{proof}


\begin{remark}\label{remarkztbounde}
By following the same steps of the proof of Part 1 of Proposition \eqref{bound1}, we have that,
for the usual stochastic order,
$$
Z_{t}^N-Z_{s}^N \leq N+2\times P_{t-s}(N\times M^<(2h)).
$$
 
\end{remark}

\begin{remark}
\label{coro1} 
As a consequence of  Part 3 of Proposition \ref{bound1}, for any $t>0$,
$$
\int_0^t \E[|U_{s}^{N}(a)| \Phi(U_{s}^{N}(a))]ds \leq \E|U_0^N|+h+2ht M^<(2h).
$$
\end{remark} 

\begin{remark} \label{rem:ztabound} Note that, under Assumption \ref{as:teo1}, the exchangeability of the system implies that
$$
\E(Z_t^N)=\sum_{b\in \A}\E(Z_t^N(b))=N\E(Z_t^N(a)).
$$
Therefore, as a consequence of Part $1$ of Proposition \ref{bound1}, we have that
$$
\E(Z_t^N(a))\leq 1+2tM^<(2h).
$$

\end{remark}

The following two propositions present a bound for $|U_t|$ defined in \eqref{eq:limiteq} and use a trick already present in \cite{evafour}.

\begin{proposition} \label{boundlimit1}
Assume that $M^<(2h)<\infty$ and suppose that $(U_t)_{t\geq 0}$ solves \eqref{eq:limiteq} with initial condition $U_0$ satisfying that $ \E | U_0 | < \infty$. Then, for any $t>0$, the following inequality holds
$$
|U_t| \leq |U_0|+\E|U_0|+2htM^<(2h).
$$
\end{proposition}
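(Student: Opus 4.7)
The key structural observation about the limit equation \eqref{eq:limiteq} is that its drift $b(s) := h(\E[\phi(U_s)]-\E[\phi(-U_s)])$ is a \emph{deterministic} function of time, depending only on the marginal law of $U_s$, while its jump part simply resets $U$ to $0$ at each jump time. Hence, between successive jump times, $U$ evolves absolutely continuously with derivative $b(s)$, and at each jump it is reset to $0$. Denoting by $\tau$ the time of the last jump before $t$ (with $\tau=0$ if no jump has occurred), one has either $U_t = U_0 + \int_0^t b(s)\,ds$ (when $\tau=0$) or $U_t = \int_\tau^t b(s)\,ds$ (otherwise). Summing the moduli of these increments along the jump grid and applying the triangle inequality yields the pathwise reset-type inequality
\begin{equation*}
|U_t| + \sum_{o \in \opn}\int_0^t \int_0^{+\infty} |U_{s-}|\mathbf{1}\{z \leq \phi(oU_{s-})\}\mathbf{N}^o(ds,dz) \leq |U_0| + \int_0^t |b(s)|\,ds,
\end{equation*}
which is the limit counterpart of \eqref{eq:boundsocialandreset}; in particular $|U_t| \leq |U_0| + \int_0^t |b(s)|\,ds$ almost surely.

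The task then reduces to bounding $\int_0^t |b(s)|\,ds$. Since $|b(s)| \leq h\,\E[\phi(U_s)+\phi(-U_s)] = h\,\E[\Phi(|U_s|)]$, I set $I(t) := \int_0^t \E[\Phi(|U_s|)]\,ds$ and apply the same splitting used in the proof of Proposition \ref{bound1}: writing
$$\Phi(|U_s|) \;\leq\; M^<(2h) + \mathbf{1}\{|U_s|>2h\}\Phi(|U_s|) \;\leq\; M^<(2h) + \frac{|U_s|}{2h}\Phi(|U_s|)$$
gives $I(t) \leq tM^<(2h) + \frac{1}{2h}\int_0^t \E[|U_s|\Phi(|U_s|)]\,ds$.

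To close the loop, I take expectations in the pathwise inequality above. Using the compensator of the Poisson measures and dropping $\E|U_t|\geq 0$ on the left, this gives $\int_0^t \E[|U_s|\Phi(|U_s|)]\,ds \leq \E|U_0| + \int_0^t |b(s)|\,ds \leq \E|U_0| + h\,I(t)$. Substituting into the previous estimate and absorbing $\tfrac{1}{2}I(t)$ on the right produces $I(t) \leq 2tM^<(2h)+\tfrac{1}{h}\E|U_0|$, and plugging this into the pathwise bound yields $|U_t| \leq |U_0| + h\,I(t) \leq |U_0| + \E|U_0| + 2htM^<(2h)$, as claimed. The main delicate point is the rigorous justification of the pathwise reset inequality — this is the limit analogue of \eqref{eq:boundsocialandreset} and is a matter of careful enumeration of jump times, using that under $M^<(2h)<\infty$ there are almost surely only finitely many jumps on any bounded interval; the finiteness of $\E[|U_s|\Phi(|U_s|)]$, needed to take expectations, can be secured a priori by a standard truncation $\phi \wedge n$ followed by passing to the limit $n\to\infty$.
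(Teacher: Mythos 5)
Your proof is correct and follows essentially the same route as the paper's: starting from the a.s.\ reset inequality $|U_t| + \sum_o\int_0^t\int_0^\infty |U_{s-}|\mathbf{1}\{z\le\phi(oU_{s-})\}\mathbf{N}^o(ds,dz) \leq |U_0| + h\int_0^t\E[\Phi(U_s)]\,ds$, taking expectations via the compensator, splitting at the threshold $2h$, and closing the resulting self-referential bound. You arrange the closing step slightly differently (bounding $\Phi \leq M^<(2h) + \tfrac{|U_s|}{2h}\Phi$ and absorbing $\tfrac12 I(t)$, rather than isolating $h\int\E[\Phi\,\mathbf{1}\{|U_s|>2h\}]$ from $\int\E[(h-|U_s|)\Phi]$ as the paper does), and you are more explicit about the a priori finiteness needed to absorb, but the ideas and the final constant are identical.
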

\begin{proof}
Recall \eqref{eq:limiteq}:
$$
U_t=U_0-\sum_{o \in \mathcal{O}}\int_0^t \int_0^{+\infty} U_{s-}\mathbf{1}\{z \leq \phi(o U_{s-})\}\mathbf{N}^o(ds,dz)
+h\sum_{o\in \mathcal{O}}o \int_0^t \E[\phi(oU_{s})]ds.
$$
It follows that
$$
|U_t|\leq |U_0|-\sum_{o \in \mathcal{O}}\int_0^t \int_0^{+\infty} |U_{s-}|\mathbf{1}\{z \leq \phi(o U_{s-})\}\mathbf{N}^o(ds,dz)
+h \int_0^t \E[\Phi(U_{s})]ds
$$
and therefore,
\begin{equation} \label{eq:limitbound1}
\E|U_t|-\E|U_0|\leq-\int_0^t  \E[|U_{s}|\Phi(U_s)]ds
+h \int_0^t \E[\Phi(U_{s})]ds=\int_0^t \E[(h-|U_{s}|)\Phi(U_s)]ds.
\end{equation}
Note that
\begin{equation} \label{eq:limitbound2}
\int_0^t \E[(h-|U_{s}|)\Phi(U_s)\mathbf{1}\{|U_s|\leq 2h\}]ds+\int_0^t \E[(h-|U_{s}|)\Phi(U_s)\mathbf{1}\{|U_s|> 2h\}]ds 
\end{equation}
$$
\leq htM^<(2h) - h\int_0^t \E[\Phi(U_s)\mathbf{1}\{|U_{s}|>2h\}]ds.
$$
By putting together \eqref{eq:limitbound1} and \eqref{eq:limitbound2}, we conclude that
$$
h\int_0^t \E[\Phi(U_s)\mathbf{1}\{|U_{s}|>2h\}]ds \leq \E|U_0|+ htM^<(2h).
$$
Since
$$
|U_t|\leq |U_0|
+h\int_0^t \E[\Phi(U_{s})]ds,
$$
this finishes the proof.
\end{proof}

\begin{proposition}\label{boundlimit2}
Assume that $M^<(2h)<\infty$ and suppose that $(U_t)_{t\geq 0}$ solves \eqref{eq:limiteq} with initial condition $U_0$ satisfying that $ \E | U_0 | < \infty$. Then, for any $t>0$, the following inequality holds
$$
\int_0^t \E[|U_{s}|\Phi(U_{s})]ds\leq 2\E|U_0|+2htM^<(2h).
$$
\end{proposition}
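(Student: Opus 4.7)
The plan is to recycle the intermediate computations from the proof of Proposition \ref{boundlimit1}. The key starting point is inequality \eqref{eq:limitbound1}, which I would rearrange (using that $\E|U_t|\geq 0$) to obtain
$$
\int_0^t \E[|U_s|\Phi(U_s)]\,ds \;\leq\; \E|U_0| - \E|U_t| + h\int_0^t \E[\Phi(U_s)]\,ds \;\leq\; \E|U_0| + h\int_0^t \E[\Phi(U_s)]\,ds.
$$
So the whole task reduces to controlling $h\int_0^t \E[\Phi(U_s)]\,ds$.

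For this, I would split the expectation according to whether $|U_s|\leq 2h$ or $|U_s|>2h$. On the set $\{|U_s|\leq 2h\}$, the definition \eqref{eq:msmaller} gives $\Phi(U_s)\leq M^<(2h)$, so this piece contributes at most $htM^<(2h)$. On the set $\{|U_s|>2h\}$, the bound established as an intermediate step in the proof of Proposition \ref{boundlimit1}, namely
$$
h\int_0^t \E[\Phi(U_s)\mathbf{1}\{|U_s|>2h\}]\,ds \;\leq\; \E|U_0| + htM^<(2h),
$$
takes care of the remaining contribution. Adding the two pieces yields
$$
h\int_0^t \E[\Phi(U_s)]\,ds \;\leq\; \E|U_0| + 2htM^<(2h),
$$
and substituting into the first display produces the claimed bound $2\E|U_0|+2htM^<(2h)$.

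There is no substantive obstacle here: both halves of the argument are already contained in the preceding proof, and the novelty is just the rearrangement of \eqref{eq:limitbound1} to isolate the $|U_s|\Phi(U_s)$ term rather than the $|U_t|$ term. The only point requiring attention is the integrability of $\E[|U_s|\Phi(U_s)]$ on $[0,t]$, but this is guaranteed by Proposition \ref{boundlimit1} (which shows $|U_s|$ is a.s.\ bounded by an integrable random variable) together with the local boundedness of $\Phi$; alternatively, one can run the same splitting argument directly inside $\int_0^t \E[|U_s|\Phi(U_s)]\,ds$ without first passing through \eqref{eq:limitbound1}, using $|U_s|\leq 2h$ on the low set and absorbing the high set via the compensator identity, which gives the same conclusion.
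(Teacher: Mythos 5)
Your proof is correct, and it takes a slightly different route than the paper's. The paper re-derives everything from \eqref{eq:limitbound1} by rewriting $(h-|U_s|) = (h-\tfrac{1}{2}|U_s|) - \tfrac{1}{2}|U_s|$, dropping the indicator on $\{|U_s|>2h\}$ for the first term (since it is nonpositive there), and bounding the first term by $htM^<(2h)$ on the low set; rearranging then gives the factor-of-two constants directly. You instead isolate $\int_0^t\E[|U_s|\Phi(U_s)]\,ds$ on the left of \eqref{eq:limitbound1}, and then bound $h\int_0^t \E[\Phi(U_s)]\,ds$ by splitting at $2h$ and reusing the intermediate estimate $h\int_0^t\E[\Phi(U_s)\mathbf{1}\{|U_s|>2h\}]\,ds\leq \E|U_0|+htM^<(2h)$ already established inside the proof of Proposition \ref{boundlimit1}. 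Both manipulations start from the same display and the same threshold, and both give the same constants; yours is a bit more modular (it imports an intermediate bound from the preceding proof), while the paper's is self-contained within the proof of Proposition \ref{boundlimit2}. One small caveat about your rearrangement of \eqref{eq:limitbound1}: moving $\int_0^t\E[|U_s|\Phi(U_s)]\,ds$ to the left presupposes that it (or equivalently $h\int_0^t\E[\Phi(U_s)]\,ds$) is finite; you establish this a posteriori by showing $h\int_0^t\E[\Phi(U_s)]\,ds\leq\E|U_0|+2htM^<(2h)<\infty$, so the argument closes, but this finiteness should be flagged explicitly before the rearrangement rather than left implicit. Your remark about an alternative route via the compensator identity is left too sketchy to assess on its own, but the main argument you present is sound.
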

\begin{proof}
Recalling \eqref{eq:limitbound1}, we have that
\begin{multline}\label{eq:limitbound11}
\E|U_t|-\E|U_0|\\
\leq
\int_0^t \E[(h-|U_{s}|)\Phi(U_{s})]ds = \int_0^t \E\left[\left(h-\frac{1}{2}|U_{s}|\right)\Phi(U_{s})\right]ds - \frac{1}{2}\int_0^t \E[|U_{s}|\Phi(U_{s})]ds.
\end{multline}
Note that 
$$
\int_0^t \E\left[\left(h-\frac{1}{2}|U_{s}|\right)\Phi(U_{s})\right]ds \leq \int_0^t \E\left[\left(h-\frac{1}{2}|U_{s}|\right)\Phi(U_{s})\mathbf{1}\{|U_s|\leq 2h\}\right]ds \leq htM^<(2h).
$$
Therefore, coming back to \eqref{eq:limitbound11},
$$
0\leq \E|U_0|+ htM^<(2h)-\frac{1}{2}\int_0^t \E[|U_{s}|\Phi(U_{s})]ds.
$$
We finish the proof by rearranging the terms of the equality above.
\end{proof}

\begin{remark}\label{remarkphitilde}
Note that Part 2 of Proposition \ref{bound1} gives us a bound for $\sup_{s\leq t}|U_s^N(a)|$ that depends only on the behavior of the function $\phi$ restricted to the interval $[-2h,2h]$. This implies that this is a general upper bound for the class of functions $\tilde{\phi}$ satisfying
$$
\sup\{\tilde{\phi}(x)+\tilde{\phi}(-x):0\leq x\leq 2h\}=\sup\{\phi(x)+\phi(-x):0\leq x\leq 2h\}.
$$
Note that this includes any function $\tilde{\phi}$ satisfying $\tilde{\phi}(x)=\phi(x)$, for any $x\in [-2h,2h] $.

Similarly, Proposition \ref{boundlimit1} gives us a general upper bound for the class of functions $\tilde{\phi}$ satisfying  the condition above.
\end{remark}

\begin{remark}\label{remark:ctdef}
Under Assumption \ref{as:boundedandcont} and by considering
\begin{equation} \label{def:ct}
\gamma_t:=2L+2htM^<(2h),
\end{equation}
Propositions \ref{boundlimit1} and \ref{boundlimit2} imply that
$$
\sup_{s\leq t} |U_s| \leq \gamma_t
$$
and 
$$
\int_0^t \E[|U_{s}|\Phi(U_{s})]ds\leq \gamma_t.
$$
\end{remark}

\subsection{Upper bounds for the social pressure of an actor
} \label{subsec:teobound}

The aim of this subsection is to provide
bounds that use the reset factor to improve the bounds obtained in Proposition \ref{bound1}. 
Our goal is to obtain a control of the total number of jumps of the process without taking into account the sign of the expressed opinions. Therefore
we will consider another representation of the system, driven by a different family of Poisson random measures. This representation is given as follows. For any $N\geq 2$ and $a\in \A$, let
\begin{equation}
\label{eq:finitemodel2} 
U_t^{N}(a)=U_0^{N}(a)-\int_0^t \int_0^{+\infty} U_{s-}^{N}(a)\mathbf{1}\{z \leq \Phi(U_{s-}^{N}(a))\}\mathbf{N}^{a}(ds,dz)
\end{equation}
$$
+ \frac{h}{N}\sum_{b\neq a}\int_0^t \int_0^{+\infty} 
\left( \mathbf{1}\{z \leq \phi(U_{s-}^{N}(b))\}-\mathbf{1}\{\phi(U_{s-}^{N}(b))<z \leq \Phi(U_{s-}^{N}(b))\}\right) \mathbf{N}^{b}(ds,dz),
$$
where $(\mathbf{N}^{a}(ds,dz): a\in \A)$ is an i.i.d. family of Poisson random measures  on $\R^+ \times \R^+$ having intensity $dsdz$.

We may now state the main theorem of this section.

\begin{theorem}\label{teo:bound}
Suppose that $M^<(2h)<\infty$ and suppose that $\displaystyle\lim_{l\to \infty}M^>(l)=+\infty$. For any $l\geq 0$, let 
$$
(M^>)^{-1}(l)=\inf\{r\geq 0: M^>(r) \geq l\} 
$$ 
be the generalized inverse function of $M^>$.
Let $a\in \A$, for any $0 \le s<t$, let 
$$
E=\inf\{r>0: \mathbf{N}^a([s,t]\times [0,r]) \geq 1\}
$$
and let $t-Z \in [s, t ]$ be the time in which the mark $E$ occurred.
Then there exists a Poisson process $(P_s(N\times M^<(2h)))_{s\geq 0}$ with rate $N\times M^<(2h)$, independent of $\mathbf{N}^a$, such that 
the following inequalities hold (the first inequality holds for the usual
stochastic order). 
\begin{enumerate}
    \item 
$
\displaystyle |U_t^N(a)|\leq (M^>)^{-1}(E) +h+\frac{2h}{N}P_{Z}(N\times M^<(2h)). 
$

\item 
$\displaystyle \E|U_t^N(a)|\leq \inf_{s\in(0,t]}\left(\E((M^>)^{-1}(E))+h+h(t-s)M^<(2h)\right).$
\end{enumerate}
\end{theorem}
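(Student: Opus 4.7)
The plan is to mirror Proposition \ref{bound1} but to exploit the reset that may occur at the random time $t-Z$, with $(t-Z, E)$ the atom of $\mathbf{N}^a$ in $[s,t]\times \R^+$ of smallest mark. I work throughout with the representation \eqref{eq:finitemodel2}, so that all jumps of actor $a$ are driven by $\mathbf{N}^a$ with intensity $\Phi(U^N_{r-}(a))$.

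For Part 1, the first step is the pathwise bound $|U^N_{t-Z}(a)| \le (M^>)^{-1}(E)$. If $|U^N_{(t-Z)-}(a)| > (M^>)^{-1}(E)$, then by monotonicity of $M^>$ and the definition of its generalized inverse one has $\Phi(U^N_{(t-Z)-}(a)) \ge E$, so the atom $(t-Z, E)$ triggers the reset and $U^N_{t-Z}(a) = 0$; otherwise $|U^N_{t-Z}(a)| \le |U^N_{(t-Z)-}(a)| \le (M^>)^{-1}(E)$ directly. After $t-Z$ only a jump of some actor $b \ne a$ can increase $|U^N(a)|$, and by at most $h/N$, while any further jump of $a$ only resets the pressure. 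Hence
$$|U^N_t(a)| \le (M^>)^{-1}(E) + \frac{h}{N}\bigl(Z^N_t - Z^N_{t-Z}\bigr).$$

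To dominate $Z^N_t - Z^N_{t-Z}$ by a Poisson process independent of $\mathbf{N}^a$, I repeat the counting argument of Proposition \ref{bound1} on the interval of length $Z$, which gives $Z^N_t - Z^N_{t-Z} \le N + 2\tilde N$ where $\tilde N$ counts the small atoms (marks $\le M^<(2h)$) of $(\mathbf{N}^b)_{b \in \A}$ in $(t-Z, t]$. The small atoms coming from actors $b \ne a$ form a Poisson process of rate $(N-1)M^<(2h)$ independent of $\mathbf{N}^a$; adjoining an independent $M^<(2h)$-rate Poisson process to absorb $a$'s contribution and using Poisson stationarity together with independence of $Z$ from this enlarged process gives $\tilde N \le_{\mathrm{st}} P_Z(NM^<(2h))$ for a Poisson process $P$ of rate $NM^<(2h)$ independent of $\mathbf{N}^a$. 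Combining the two displays yields Part 1.

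Part 2 follows by taking expectation of Part 1: since $P$ is independent of $\mathbf{N}^a$ and hence of $Z$, Wald's identity gives $\E[P_Z(NM^<(2h))] = NM^<(2h)\,\E[Z]$, and using $Z \le t-s$ together with the infimum over $s \in (0, t]$ yields the claim. The main obstacle is precisely this coupling step: the random endpoint $t-Z$ is $\mathbf{N}^a|_{[s,t]}$-measurable, so the dominating Poisson process must be built from data independent of $\mathbf{N}^a$; the structure of \eqref{eq:finitemodel2}, in which different actors are driven by independent Poisson measures, is exactly what enables this, and Poisson stationarity handles the transition from the random interval $(t-Z, t]$ to $P_Z$.
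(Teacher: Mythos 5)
Your approach matches the paper's almost exactly for Part 1: locate the time $t-Z$ of the minimal-mark atom of $\mathbf{N}^a$ in $[s,t]$, show $|U^N_{t-Z}(a)| \leq (M^>)^{-1}(E)$, and run the counting argument of Proposition \ref{bound1} on the random interval $(t-Z, t]$. Your direct argument for the first step is a valid alternative to the paper's passage through Proposition \ref{prop:bigjumpbound}. Your decomposition of the dominating Poisson process, though phrased informally ("adjoining an independent process to absorb $a$'s contribution"), is correct and if anything more explicit than the paper's one-line appeal to the independence of $(E,Z)$ from $(\mathbf{N}^b)_{b\neq a}$: conditionally on $(E,Z)$, any small atom of $\mathbf{N}^a$ in the open interval $(t-Z,t]$ must have mark in $(E, M^<(2h)]$, so the number of such atoms is Poisson of mean $Z\,(M^<(2h)-E)^+ \leq Z\,M^<(2h)$ and is therefore stochastically dominated by a fresh rate-$M^<(2h)$ Poisson process independent of $\mathbf{N}^a$; adding the small atoms of the $b\neq a$ processes gives the dominating $P$ of rate $N M^<(2h)$, independent of $\mathbf{N}^a$ and hence of $Z$.

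For Part 2, however, your argument only yields a strictly weaker bound than the one stated. Wald's identity gives $\E\bigl[\tfrac{2h}{N} P_Z(N M^<(2h))\bigr] = 2h\, M^<(2h)\,\E[Z]$, and substituting the crude bound $Z \leq t-s$ produces $2h(t-s)M^<(2h)$, whereas the theorem asserts $h(t-s)M^<(2h)$. The missing factor of $2$ comes from identifying the actual law of $Z$: the atom of smallest mark of a homogeneous Poisson measure on $[s,t]\times\R^+$ occurs at a time uniformly distributed over $[s,t]$, so $Z\sim \mathrm{Unif}(0,t-s)$ and $\E[Z]=(t-s)/2$. With this, $2h\,M^<(2h)\,\E[Z]=h(t-s)M^<(2h)$, which is what the paper uses. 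Replacing ``$Z\leq t-s$'' by ``$Z\sim\mathrm{Unif}(0,t-s)$'' closes the gap.
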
 

To prove the above theorem, we will need the following notation.
For any $N\geq 2$, let
$$
\tilde{Z}_t^N(a):=\int_0^t \int_0^{+\infty}\mathbf{1}\{z \leq \Phi(U_{s-}^{N}(a))\}\mathbf{N}^{a}(ds,dz)
$$
be the number of jumps of actor $a\in \A$ until time $t$ and
$$
\tilde{Z}_t^N\mydef
\sum_{a\in \A} \tilde{Z}_t^N(a)
$$
be the total number of jumps of the system with $N$ actors until time $t$. 
Note that $\tilde{Z}_t^N(a)$ and $\tilde{Z}_t^N$ are the adapted definitions of $Z_t^N(a)$ and $Z_t^N$ taking into account the representation \eqref{eq:finitemodel2}. 
The use of this representation and this notation is restricted to this subsection.

\begin{remark} \label{remark:zttilde} Recall definition \eqref{def:ptpoissonproc}. For any $t>0$, it follows from Proposition \ref{bound1} that  for the usual stochastic order,
$$
\tilde{Z}_{t}^N \leq N+2\times P_t(N\times M^<(2h)).
$$
Moreover, for any $s<t$, it follows from Remark \ref{remarkztbounde} that for the usual stochastic order,
$$
\tilde{Z}_{t}^N-\tilde{Z}_{s}^N \leq N+2\times P_{t-s}(N\times M^<(2h)).
$$
\end{remark}

To obtain the bounds of this subsection, for a fixed $N\geq 2$ and a fixed $a\in \A$, consider the homogeneous Poisson process with rate $\lambda>0$ given by
$$
(\mathbf{N}^a([0,t]\times [0,\lambda]): t\geq 0).
$$
For any $\lambda>0$, let $(T_n^{\lambda})_{n\geq 1}$ be the jump times of this Poisson process.

To prove Theorem \ref{teo:bound} we will use the following Proposition.
 
\begin{proposition}\label{prop:bigjumpbound}
For $l>0$, suppose that $M^>(l)>0$. Then, for any $n\geq 1$,
$$
|U_{T_n^{M^>(l)}}^N(a)| \leq l.
$$
\end{proposition}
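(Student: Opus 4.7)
The plan is to argue pointwise at each jump time $T_n^{M^>(l)}$, using a simple case split on the value of $|U_{T_n^{M^>(l)}-}^N(a)|$. Observe that $T_n^{M^>(l)}$ is by construction the time of an atom $(T_n^{M^>(l)}, Z_n)$ of $\mathbf{N}^a$ with mark $Z_n \leq M^>(l)$. Since the Poisson measures $(\mathbf{N}^b)_{b\neq a}$ are independent of $\mathbf{N}^a$, almost surely no $\mathbf{N}^b$ has an atom at time $T_n^{M^>(l)}$, so the second sum in \eqref{eq:finitemodel2} contributes nothing at that instant. Hence
$U_{T_n^{M^>(l)}}^N(a)$ is either equal to $U_{T_n^{M^>(l)}-}^N(a)$, if the indicator $\mathbf{1}\{Z_n \leq \Phi(U_{T_n^{M^>(l)}-}^N(a))\}$ does not fire, or equal to $0$, if it does fire (the reset triggered by the term $-U_{s-}^N(a)$).

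The first case to handle is $|U_{T_n^{M^>(l)}-}^N(a)| \leq l$: then either $U_{T_n^{M^>(l)}}^N(a) = U_{T_n^{M^>(l)}-}^N(a)$, which already satisfies the bound, or $U_{T_n^{M^>(l)}}^N(a) = 0$, which also satisfies it. In the remaining case $|U_{T_n^{M^>(l)}-}^N(a)| > l$, the definition $M^>(l) = \inf\{\Phi(r) : r > l\}$ (extended symmetrically by $\Phi(x) = \phi(x) + \phi(-x)$ for all real $x$, so that $\Phi$ depends only on $|x|$) gives $\Phi(U_{T_n^{M^>(l)}-}^N(a)) \geq M^>(l) \geq Z_n$. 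Therefore the indicator fires, the reset takes effect, and $U_{T_n^{M^>(l)}}^N(a) = 0 \leq l$. Combining both cases establishes $|U_{T_n^{M^>(l)}}^N(a)| \leq l$.

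The only subtle point is the almost sure exclusion of simultaneous atoms from the other Poisson measures; this is immediate from the independence of the family $(\mathbf{N}^b)_{b \in \A}$, since two independent Poisson random measures on $\R^+ \times \R^+$ share no common atom almost surely. Apart from this, the argument is essentially a direct reading of the thinning of $\mathbf{N}^a$ at level $M^>(l)$: every atom of this thinned process that finds actor $a$ above the threshold $l$ is guaranteed to trigger a reset. There is no real obstacle here, which is why the proposition will later serve as a clean building block for the bound on $|U_t^N(a)|$ in Theorem \ref{teo:bound}.
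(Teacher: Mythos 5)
Your proof is correct and follows essentially the same idea as the paper's: thin $\mathbf{N}^a$ at level $M^>(l)$ and observe that any atom of this thinned process must reset actor $a$ whenever the pre-jump social pressure exceeds $l$. The two write-ups differ only in organization — you split on whether $|U_{T_n^{M^>(l)}-}^N(a)|$ exceeds $l$, whereas the paper splits on whether $\Phi(U_{T_n^{M^>(l)}-}^N(a))$ exceeds $M^>(l)$ and then closes by bounding $\sup\{r\geq 0:\Phi(r)<M^>(l)\}\leq l$; these are equivalent once one uses that $\Phi$ is even and that $\Phi(r)\geq M^>(l)$ for all $r>l$. One small bonus in your version: you make explicit the a.s.\ absence of simultaneous atoms from the other actors' measures $\mathbf{N}^b$, which the paper's claim $|U_{T_n^{\lambda}}^N(a)|\leq |U_{T_n^{\lambda}-}^N(a)|$ in the non-firing case implicitly relies on but does not spell out.
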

\begin{proof}
Note that, for any $\lambda>0$ and for any $r\in \R$ such that $\Phi(r) \geq \lambda$, it follows that
$$
 \{T_n^{\lambda} :n\geq 1\} \subset \{T_n^{\Phi(r)} :n\geq 1\} .
$$
This implies that for any $n\geq 1$,  $T_n^{\lambda}$ is a jump time of actor $a$ if $\Phi(U_{T_n^{\lambda}-}^N(a)) \geq \lambda$. 
Therefore, by the reset mechanism of \eqref{eq:finitemodel2},
$$
U_{T_n^{\lambda}}^N(a)=
0, \quad \text{ if } \quad \Phi(U_{T_n^{\lambda}-}^N(a)) \geq \lambda
$$
and 
$$
|U_{T_n^{\lambda}}^N(a)|\leq |U_{T_n^{\lambda}-}^N(a)|, \quad  \text{ if } \quad  \Phi(U_{T_n^{\lambda}-}^N(a)) < \lambda.
$$
In particular, this implies that
$$
|U_{T_n^{\lambda}}^N(a)| \leq \sup\{r\geq 0:\Phi(r)<\lambda\}.
$$
In the case $\lambda=M^>(l)$, by the definition of $M^>(l)$, note that $\Phi(r)\geq \lambda$ for any $r>l$. This implies that
$$
|U_{T_n^{M^>(l)}}^N(a)| \leq \sup\{r\geq 0:\Phi(r)<M^>(l)\} \leq l.
$$
\end{proof}

We conclude that the homogeneous Poisson process with rate $M^>(l)$ given by
\begin{equation} \label{eq:homoppupper}
(\mathbf{N}^a([0,t]\times [0,M^>(l)]):t\geq 0)
\end{equation}
is related with the jumps of the actor $a \in \A$ in the following way: for each mark of this process, the actor $a$ tries to jump. This means that if its social pressure is
greater or equal than $l$, the actor jumps. This implies that, at each mark of this Poisson process, either its social pressure is reset to zero or its social pressure in absolute value is bounded by $l$. This is exactly the content of Proposition \ref{prop:bigjumpbound}.


The next proposition improves the bounds for the social pressure presented in Proposition \ref{bound1} by considering Proposition \ref{prop:bigjumpbound}.

\begin{proposition} \label{bound2}
Assume that $M^<(2h)<\infty$ and suppose that for a given  $l\geq0$ we have that $M^>(l)>0$. Let $a\in \A$ and for $t>0$,
let 
$$
\tau =t-\sup\{s\in[0,t]:\mathbf{N}^a([s,t]\times [0,M^>(l)]\geq 1\},
$$
with the convention that $\sup\{\emptyset\}=0$.
Then there exists a Poisson process $(P_s(N\times M^<(2h)))_{s\geq 0}$ with rate $N\times M^<(2h)$, independent of $\mathbf{N}^a ,$ such that
the following inequalities hold (the first inequality holds for the usual
stochastic order).
    \begin{enumerate}
        \item 
   $ \displaystyle
|U_t^N(a)| \leq |U_0^N(a)| \mathbf{1}\{\tau = t\}+l+h+\frac{2h}{N}\times P_{\tau}(N \times M^<(2h)). 
$
\item
$
\E|U_t^N(a)| \leq \E|U_0^N(a)| e^{-M^>(l)t}+l+h+2hM^<(2h)M^>(l).
$
\end{enumerate}
\end{proposition}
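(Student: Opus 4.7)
The plan is to apply Proposition \ref{prop:bigjumpbound} to locate a recent time $t-\tau$ at which $|U^N(a)|$ is automatically bounded by $l$, and then to control the increment of $|U^N(a)|$ on the resulting short random window $[t-\tau,t]$ using the small-jump Poisson dominance from Remark \ref{remark:zttilde}. For the first ingredient, I would consider the thinned Poisson process $(\mathbf{N}^a([0,s]\times[0,M^>(l)]))_{s\geq 0}$ of rate $M^>(l)$, whose jump times are exactly the epochs at which actor $a$ is forced (by Proposition \ref{prop:bigjumpbound}) to satisfy $|U^N(a)|\leq l$. The quantity $t-\tau$ appearing in the statement is precisely the largest such jump time lying in $[0,t]$, if any; thus on $\{\tau<t\}$ we have $|U_{t-\tau}^N(a)|\leq l$, while on $\{\tau=t\}$ we only retain the trivial identity $|U_{t-\tau}^N(a)|=|U_0^N(a)|$.

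For the increment on $[t-\tau,t]$, I would read off from representation \eqref{eq:finitemodel2} that the self-jumps of actor $a$ only reset $U^N(a)$ to $0$, hence cannot increase $|U^N(a)|$, whereas each jump of an actor $b\neq a$ shifts $U^N(a)$ by $\pm h/N$. A short case split (at least one self-jump in $(t-\tau,t]$, or none) yields the deterministic inequality
$$
|U_t^N(a)|\leq |U_{t-\tau}^N(a)|\,\mathbf{1}\{\tau=t\}\,+\,l\,\mathbf{1}\{\tau<t\}\,+\,\frac{h}{N}\bigl(\tilde Z_t^N-\tilde Z_{t-\tau}^N\bigr).
$$
Applying Remark \ref{remark:zttilde} to dominate $\tilde Z_t^N-\tilde Z_{t-\tau}^N$ by $N+2P_\tau(NM^<(2h))$, one rewrites $\frac{h}{N}(N+2P_\tau)=h+\frac{2h}{N}P_\tau$, which, combined with the previous inequality, gives Part 1.

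For Part 2, I would take expectations in Part 1. The event $\{\tau=t\}$ is exactly the event of no mark of the rate-$M^>(l)$ Poisson process in $[0,t]$, so $\P(\tau=t)=e^{-M^>(l)t}$. By the independence of $P$ from $\tau$, conditioning on $\tau$ yields $\E[P_\tau(NM^<(2h))]=NM^<(2h)\,\E[\tau]$. Finally, since $\tau$ is the backward recurrence time (capped at $t$) of a rate-$M^>(l)$ Poisson process, $\E[\tau]\leq 1/M^>(l)$. Plugging these estimates in yields the announced bound (with constant $2hM^<(2h)/M^>(l)$ in the last term).

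The main obstacle is arranging the coupling so that the Poisson process $P$ of rate $NM^<(2h)$ is independent of $\mathbf{N}^a$, and hence of $\tau$; a naive dominating process built from all small marks of the whole system would not be. The way around this is to exploit Poisson thinning along the $z$-axis: split $\mathbf{N}^a$ at level $M^>(l)$, use only the component with $z>M^>(l)$ (which by Poisson thinning is independent of the component with $z\leq M^>(l)$ that defines $\tau$) together with the measures $\mathbf{N}^b$ for $b\neq a$ to construct $P$, adding an independent auxiliary Poisson process if necessary to match the rate $NM^<(2h)$. This independence is exactly what allows the clean expectation computation in Step~3.
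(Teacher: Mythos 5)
Your overall strategy is the same as the paper's: use the thinned rate-$M^>(l)$ component of $\mathbf{N}^a$ to locate the last forced ``trial'' time $t-\tau$, invoke Proposition \ref{prop:bigjumpbound} to get $|U_{t-\tau}^N(a)|\leq l$ on $\{\tau<t\}$ (and $=|U_0^N(a)|$ on $\{\tau=t\}$), bound the increment over $[t-\tau,t]$ via Remark \ref{remark:zttilde}, and for Part 2 compute expectations using $\P(\tau=t)=e^{-M^>(l)t}$, $\E[\tau]\leq 1/M^>(l)$ and the independence of $P$ from $\tau$. Your final constant $2hM^<(2h)/M^>(l)$ is in fact the correct one; the paper's displayed $2hM^<(2h)M^>(l)$ appears to be a typographical slip.

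The one place you diverge from the paper, and where your argument gets into trouble, is the increment bound. The pathwise inequality \eqref{eq:eqresetbound1} obtained from the representation \eqref{eq:finitemodel2} reads
$$
|U_t^{N}(a)|\leq |U_{t-\tau}^{N}(a)|+\frac{h}{N}\sum_{b\neq a}\bigl(\tilde Z_t(b)-\tilde Z_{t-\tau}(b)\bigr),
$$
because the self-jump term of actor $a$ enters with a minus sign and is simply dropped; the sum is deliberately taken over $b\neq a$. You instead enlarge this to the full $\tilde Z_t^N-\tilde Z_{t-\tau}^N$, thereby re-introducing a functional of $\mathbf{N}^a$. You correctly notice that this spoils the independence of the dominating Poisson process from $\mathbf{N}^a$, but your proposed remedy — splitting $\mathbf{N}^a$ at the threshold $z=M^>(l)$ and using only the $z>M^>(l)$ part — does not cleanly separate the marks defining $\tau$ from those triggering actor $a$'s own jumps: the jumps of $a$ below the level $2h$ are triggered by marks with $z\leq M^<(2h)$, and nothing forces $M^<(2h)\leq M^>(l)$; when $M^<(2h)>M^>(l)$ some of those triggering marks lie in exactly the strip $z\leq M^>(l)$ that defines $\tau$. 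The clean fix is the one the paper uses: never re-introduce $a$'s self-jumps. The remaining quantity $\sum_{b\neq a}(\tilde Z_t(b)-\tilde Z_{t-\tau}(b))$ is what makes the final appeal to the independence of $\tau$ (a functional of $\mathbf{N}^a$) from $(\mathbf{N}^b:b\neq a)$ meaningful.
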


\begin{proof}
For any $0\leq r<t$, it follows from \eqref{eq:finitemodel2} that
\begin{multline*}
|U_t^{N}(a)|\leq |U_r^{N}(a)|-\int_r^t \int_0^{+\infty} |U_{s-}^{N}(a)|\mathbf{1}\{z \leq \Phi(U_{s-}^{N}(a))\}\mathbf{N}^{a}(ds,dz)
\\
+\frac{h}{N}\sum_{b\neq a}\int_r^t \int_0^{+\infty} \left(
\mathbf{1}\{z \leq \phi(U_{s-}^{N}(b))\}-\mathbf{1}\{\phi(U_{s-}^{N}(b))<z \leq \Phi(U_{s-}^{N}(b))\}\right)\mathbf{N}^{b}(ds,dz).
\end{multline*}
This implies that
\begin{equation}\label{eq:eqresetbound1}
|U_t^{N}(a)|\leq |U_r^{N}(a)|
+ \frac{h}{N}\sum_{b\neq a}(\tilde{Z}_t(b) - \tilde{Z}_{r}(b)).
\end{equation}
Note that 
$\tau$ is the distance between $t$ and the time of the last mark of the homogeneous Poisson process \eqref{eq:homoppupper}, if there is a mark. If there is no mark, then $\tau =t$
(see Figure \ref{fig1}).
If $\tau<t$, then $t-\tau$ is the time of a mark of the Poisson process \eqref{eq:homoppupper}. Therefore, 
by Proposition \ref{prop:bigjumpbound},
$$
|U_{t-\tau}^N(a)|
\begin{cases}
=|U_0^N(a)|, &\text{ if } \tau=t, \\  
\leq l, &\text{ if } \tau <  t.
\end{cases}
$$

By taking $r=t-\tau $ in \eqref{eq:eqresetbound1} we have that
$$
|U_t^{N}(a)|\leq |U_0^N(a)|\mathbf{1}\{\tau= t\}+l+\frac{h}{N}\sum_{b\neq a}(\tilde{Z}_t(b) - \tilde{Z}_{t-\tau}(b)).
$$
By putting this together with Remark \ref{remark:zttilde} and noting that $\tau$ is independent of $(\mathbf{N}^b(ds,dz):b\neq a)$, we conclude the proof of Part 1 of Proposition \ref{bound2}. By using again the independence of $\tau$ and $(\mathbf{N}^b(ds,dz):b\neq a)$ and by noting that $\tau \sim Exp(M^>(l)) \wedge t$, where $Exp(M^>(l))$ is an exponential random variable with mean $M^>(l)$,
Part 2 follows directly.

\end{proof}

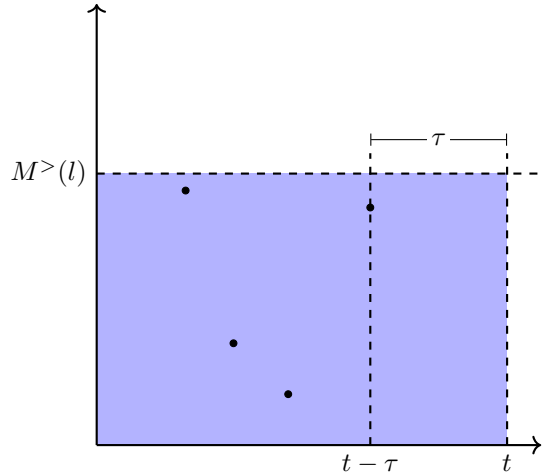
\begin{figure}[h]
\centering
\begin{tikzpicture}[scale=0.9]
\fill[blue!30!white] (0,0) rectangle (6,4);
\draw[thick,->] (0,0) -- (6.5,0);
\draw[thick,->] (0,0) -- (0,6.5);

\draw[thick,dashed] (0,4)  node[anchor=east] {$M^>(l)$} -- (6.5,4);
\draw[thick,dashed] (6,0)  node[anchor=north] {$t$} -- (6,4.3);

\draw[thick,dashed] (4,0)  node[anchor=north] {$t-\tau$} -- (4,4.3);
\draw [fill] (4,3.5) circle [radius=0.05];
\draw [fill] (2,1.5) circle [radius=0.05];
\draw [fill] (1.3,3.75) circle [radius=0.05];
\draw [fill] (2.8,0.75) circle [radius=0.05];

\draw[|-|] (4,4.5) -- node[fill=white,inner sep=0.5mm,outer sep=0.5mm] {$\tau$} (6,4.5);

\end{tikzpicture}
\caption{This figure represents $\tau$ defined in Proposition \ref{bound2}. The points in the blue area are the marks of the Poisson process $(\mathbf{N}^a([0,s]\times [0,M^>(l)]):s\geq 0)$ until time $t$. $\tau$ is the distance between $t$ and the time of the last mark of this homogeneous Poisson process, if there is a mark. }
\label{fig1}
\end{figure}


Now we can prove the main theorem of this section.
\begin{proof}[Proof of Theorem \ref{teo:bound}]
On the event $\{E=r, Z=z\}$ (see Figure \ref{fig2}), by the definitions of $E$ and $Z$ we have that  the Poisson process
$$
(\mathbf{N}^a([0,s]\times [0,r]):s\geq 0)
$$
has a mark at time $t-z$. Therefore, for any $l\geq 0$ such that $M^>(l)\geq r$,
the Poisson process
$$
(\mathbf{N}^a([0,s]\times [0,M^>(l)]):s\geq 0)
$$
has a mark at time $t-z$. Note that, since by assumption we have that $\lim_{l\to \infty}M^>(l)=\infty$, it follows that $\{l\geq 0: M^>(l)\geq r\} \neq \emptyset$, for any $r\geq 0$. By Proposition \ref{prop:bigjumpbound}, this implies that
$$
|U_{t-z}^N(a)|\leq l, \text{ for any } l\geq 0 \text{ such that } M^>(l)\geq r.
$$
This implies that
$$
|U_{t-z}^N(a)|\leq \inf\{l\geq 0: M^>(l) \geq r\}=(M^>)^{-1}(r). 
$$
Since this holds for any $z$ and any $r$, we conclude that
$$
|U_{t-Z}^N(a)|\leq (M^>)^{-1}(E).
$$

By considering \eqref{eq:eqresetbound1} with $r=t-Z$, we have that
$$
|U_t^N(a)|\leq (M^>)^{-1}(E)+\frac{h}{N}\sum_{b\neq a}(\tilde Z_t(b)-\tilde Z_{t-Z}(b)).
$$
By putting this together with Remark \ref{remark:zttilde} and noting that $E$ and $Z$ are independent of $(\mathbf{N}^b(ds,dz):b\neq a)$, we conclude the proof of Part 1 of Theorem \ref{teo:bound}. By using again the independence of $E$, $Z$ and $(\mathbf{N}^b(ds,dz):b\neq a)$ and by noting that $E \sim Exp(t-s)$ and $Z \sim Unif(0,t-s)$, where $Exp(t-s)$ is an exponential random variable with mean $t-s$  and $Unif(0,t-s)$ is an uniform random variables in $[0,t-s]$, Part 2 follows directly.
\end{proof}

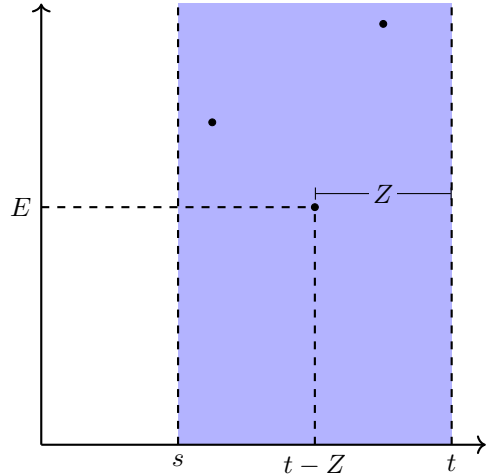
\begin{figure}[h]
\centering
\begin{tikzpicture}[scale=0.9]
\fill[blue!30!white] (2,0) rectangle (6,6.5);
\draw[thick,->] (0,0) -- (6.5,0);
\draw[thick,->] (0,0) -- (0,6.5);

\draw[thick,dashed] (2,0)  node[anchor=north] {$s$} -- (2,6.5);
\draw[thick,dashed] (6,0)  node[anchor=north] {$t$} -- (6,6.5);

\draw[thick,dashed] (0,3.5)  node[anchor=east] {$E$} -- (4,3.5);
\draw[thick,dashed] (4,0)  node[anchor=north] {$t-Z$} -- (4,3.5);
\draw [fill] (4,3.5) circle [radius=0.05];

\draw [fill] (2.5,4.75) circle [radius=0.05];
\draw [fill] (5,6.2) circle [radius=0.05];

\draw[|-|] (4,3.7) -- node[fill=blue!30!white,inner sep=0.5mm,outer sep=0.5mm] {$Z$} (6,3.7);

\end{tikzpicture}
\caption{This figure represents $Z$ and $E$ defined in Theorem \ref{teo:bound}. The points in the blue area are the first three marks of the Poisson process $(\mathbf{N}^a([s,t]\times [0,r]):r\geq 0)$. $E$ is the height of the first mark of this homogeneous Poisson process, and $Z$ is distance between $t$ and this mark.}
\label{fig2}
\end{figure}

\begin{remark}
Note that Theorem \ref{teo:bound} gives us a bound on $|U_t^N(a)|$ that is independent of the initial condition. Note that this bound depends on the choice of $t-s \in (0,t)$. The first term of the bound
$$
(M^>)^{-1}(E) 
$$
decreases when $(t-s)$ increases and the second term
$$
\frac{2h}{N}P_{Z}(N\times M^<(2h))
$$
increases when $(t-s)$ increases. Part 2 of Theorem \ref{teo:bound} suggests that for sufficiently large values of $t>0$ we should take a value of $s \in (0,t)$ that provides a balance of the two terms.
\end{remark}

\begin{remark}
Note that depending on $\phi$ and $(t-s)$, $\E((M^>)^{-1})(E))$ can be infinite. Consider for example $\Phi(x)=\ln(1+x)$. In this case, $M^>(x)=\Phi(x)$ and 
$$
(M^>)^{-1}(x)=e^x-1.
$$
We have that
$$
\E((M^>)^{-1}(E))= \int_0^{\infty}(e^x-1)(t-s)e^{-(t-s)x}dx.
$$
Therefore,
$$
\E((M^>)^{-1}(E))=
\begin{cases}
\frac{1}{(t-s)-1}, &\text{ if } (t-s) >1, \\
\infty, &\text{ if } (t-s) \leq 1.    
\end{cases}
$$
This implies that Part 2 of Theorem \ref{teo:bound} in the case $\Phi(x)=\ln(1+x)$ and $t-s\leq 1$ is a trivial inequality.
\end{remark}

\section{Proof of Theorem \ref{teo:tight}} \label{sec:tight}

For the remainder of this article we will use the representation of $(U_t^N(a):a\in \A, t\geq 0)$ given by \eqref{eq:finitemodel}.

\begin{proof}[Proof of Theorem \ref{teo:tight}]
It is well known that Part 2 follows from Part 1 and the exchangeability of the system, see \cite{sznit} [Proposition 2.2(ii)].
To show Part 1 we use the criterion of
Aldous, see \cite{jacshi} [Theorem 4.5, page 356]. It is sufficient to prove that


\begin{itemize}
\item[a)] 
For any $T>0$,
$$
\lim_{K\to \infty}\sup_{N\geq 2}\P\left(\sup_{t\leq T}|U_t^{N}(1)| > K\right)=0.
$$
\item[b)] For all $T>0$, for all $\epsilon>0$,
$$
\lim_{\delta \to 0}\lim_{N \to \infty} \sup_{(S,S')\in A_{\delta, T}} \P(|U_{S'}^{N}(1)-U_{S}^{N}(1)|> \epsilon)=0,
$$
where $A_{\delta, T}$ is the set of all pairs of stopping times $(S,S')$ such that $0\leq S \leq S'\leq S+\delta \leq T$.
\end{itemize}

Item a) follows directly from Proposition \ref{bound1}. 
For item b), if follows from \eqref{eq:finitemodel} that
$$
U_{S'}^{N}(1)-U_{S}^{N}(1)=
-\sum_{o \in \mathcal{O}}\int_S^{S'} \int_0^{+\infty} U_{s-}^{N}(1)\mathbf{1}\{z \leq \phi(o U_{s-}^{N}(1))\}\mathbf{N}^{1, o}(ds,dz)
$$
$$
+ \frac{h}{N}\sum_{o \in \mathcal{O}}\sum_{b\neq 1}\int_S^{S'} \int_0^{+\infty} 
o\mathbf{1}\{z \leq \phi(oU_{s-}^{N}(b))\}\mathbf{N}^{b,o}(ds,dz).
$$
Therefore,
$$
|U_{S'}^{N}(1)-U_{S}^{N}(1)|\leq 
\sum_{o \in \mathcal{O}}\int_S^{S'} \int_0^{+\infty} |U_{s-}^{N}(1)|\mathbf{1}\{z \leq \phi(o U_{s-}^{N}(1))\}\mathbf{N}^{1, o}(ds,dz) +\frac{h}{N}(Z_{S'}^N-Z_S^N).
$$
Moreover, note that for any $\epsilon>0$,
\begin{multline*}
\P\left(\sum_{o \in \mathcal{O}}\int_S^{S'} \int_0^{+\infty} |U_{s-}^{N}(1)|\mathbf{1}\{z \leq \phi(o U_{s-}^{N}(1))\}\mathbf{N}^{1, o}(ds,dz)>\epsilon\right)\leq \\
\P\left(\sum_{o \in \mathcal{O}}\int_S^{S'} \int_0^{+\infty} \mathbf{1}\{z \leq \phi(o U_{s-}^{N}(1))\}\mathbf{N}^{1, o}(ds,dz)\geq 1\right). 
\end{multline*}
Therefore, 
\begin{multline*}
\P(|U_{S'}^{N}(1)-U_{S}^{N}(1)|> \epsilon) \leq 
\\
\P\left(\sum_{o \in \mathcal{O}}\int_S^{S'} \int_0^{+\infty} \mathbf{1}\{z \leq \phi(o U_{s-}^{N}(1))\}\mathbf{N}^{1, o}(ds,dz)\geq 1\right)+ \P\left(\frac{h}{N}(Z_{S'}^N-Z_S^N) >\frac{\epsilon}{2}\right).
\end{multline*}
Putting all this together with the
Markov inequality and recalling that $S'-S<\delta$, we conclude that the left-hand side of the inequality above is upper bounded by
\begin{equation}
\label{eq:tight}
\E\left(\sum_{o\in \opn}\int_S^{S+\delta} \int_0^{+\infty} \mathbf{1}\{z \leq \phi(oU_{s-}^{N}(1))\}\mathbf{N}^{1,o}(ds,dz)\right)+\frac{2h}{N\epsilon}\E(Z_{S+\delta}^N-Z_S^N).
\end{equation}
For any $l>0$, the first term of \eqref{eq:tight} is equal to 
$$
\E\left(\int_S^{S+\delta} \Phi(U_{s}^{N}(1))\mathbf{1}\{|U_{s}^{N}(1)|\leq l\}ds\right)+\E\left(\int_S^{S+\delta} \Phi(U_{s}^{N}(1))\mathbf{1}\{|U_{s}^{N}(1)|> l\}ds\right).
$$
This implies that for any $l>0$, the first term of \eqref{eq:tight} is upper bounded by
$$
\delta M^<(l)+\frac{1}{l}\E\left(\int_0^{T} |U_s^N(1)|\Phi(U_{s}^{N}(1))ds\right).
$$
Recall that by Assumption \ref{as:phi1},  $M^<(r)< \infty$ for any $r\geq 0$.
If $M^<(r)\to \infty$ as $r\to \infty$,
by considering the inequality above with $l=l(\delta)=\frac{1}{2}\sup\{r>0:M^<(r)\leq \delta^{-1/2}\}$ and using Remark \ref{coro1} we conclude that the first term of \eqref{eq:tight} goes to $0$ as $\delta \to 0$. In the other case, that is, if $\phi$ is bounded, it is sufficient to take $l=l(\delta)=\delta^{-1/2}$ to conclude that the first term of \eqref{eq:tight} goes to $0$ as $\delta \to 0$.

Finally, by exchangeability of the system note that
\begin{multline*}
\E(Z_{S+\delta}^N-Z_S^N)=N\E(Z_{S+\delta}^N(1)-Z_S^N(1))=
\\
N\E\left(\sum_{o\in \opn}\int_S^{S+\delta} \int_0^{+\infty} \mathbf{1}\{z \leq \phi(oU_{s-}^{N}(1))\}\mathbf{N}^{1,o}(ds,dz)\right).
\end{multline*}
Therefore,  the second term of \eqref{eq:tight} can be treated as the first term, and it goes to $0$ as $\delta \to 0$, which concludes the proof.



\end{proof}

\section{Proof of Theorem \ref{teo:sde}} \label{sec:sde}

We first prove Part 1 of Theorem \ref{teo:sde}.
\begin{proof}
Consider two solutions $(U_t)_{t\geq 0}$ and $(\tilde U_t)_{t\geq 0}$ of (2), driven by the same Poisson measures $\mathbf{N}^{-1}$, $\mathbf{N}^{+1}$ and with $U_0=\tilde U_0$:
\begin{eqnarray*}
U_t&=&U_0-\sum_{o \in \opn}\int_0^t \int_0^{+\infty} U_{s-}\mathbf{1}\{z \leq \phi(oU_{s-})\}\mathbf{N}^o(ds,dz)
+h\sum_{o\in \mathcal{O}}o \int_0^t \E[\phi(oU_{s})]ds,
\\
\tilde U_t&=&U_0-\sum_{o \in \opn}\int_0^t \int_0^{+\infty} \tilde U_{s-}\mathbf{1}\{z \leq \phi(o\tilde U_{s-})\}\mathbf{N}^o(ds,dz)
+h\sum_{o\in \mathcal{O}}o \int_0^t \E[\phi(o\tilde U_{s})]ds.
\end{eqnarray*}
We have that 
\begin{multline*}
|U_t-\tilde U_t|\leq 
\sum_{o\in \opn}\Big[\int_0^t \int_0^{+\infty}-|U_{s-}-\tilde U_{s-}|\mathbf{1}\{z \leq \phi(o\tilde U_{s-})\wedge \phi(oU_{s-}) \} + 
\\
|\tilde U_{s-}|\vee |U_{s-}|\mathbf{1}\{\phi(o\tilde U_{s-})\wedge \phi(oU_{s-})< z \leq \phi(o\tilde U_{s-})\vee \phi(oU_{s-}) \}\mathbf{N}^o(ds,dz)\Big]+
\\
h\int_0^t |\E(\phi(U_s))-\E(\phi(-U_s))-\E(\phi(\tilde U_s))+\E(\phi(-\tilde U_s))|ds.
\end{multline*}
Therefore,
\begin{multline*}
\E\left(\sup_{s\leq t}|U_s-\tilde U_s|\right)\leq 
\int_0^t \E\left[(|\tilde U_{s}| \vee |U_{s}|)\left(\sum_{o\in \opn}|\phi(o\tilde U_{s})-\phi(oU_{s})|\right)\right]ds
\\
+h\int_0^t |\E(\phi(U_s))-\E(\phi(\tilde U_s))|+|\E(\phi(-U_s))-\E(\phi(-\tilde U_s))|ds.
\end{multline*}
Recalling Remark \ref{remark:ctdef}, by Proposition \ref{boundlimit1} and Assumption \ref{as:boundedandcont}, we know that
$$
\sup\{|U_{s}|,|\tilde U_{s}|: s \in [0,t]\} \leq 2L+2htM^<(2h) = \gamma_t.
$$
This implies that
$$
\max\{\Phi(U_{s}),\Phi(\tilde U_{s}): s \in [0,t]\} \leq M^<(\gamma_t).
$$
Note that $M^<(\gamma_t) <\infty$, by Assumption \ref{as:phi2}.
Since $\phi$ is locally Lipschitz by Assumption \ref{as:phi2}, there exists $C_{lip}(\gamma_t)>0,$ the Lipschitz constant of  $\phi$ restricted to $[-\gamma_t,\gamma_t],$ such that for $s\leq t$ and for any $o\in \opn$,
$$
|\phi(o\tilde U_{s})-\phi(oU_{s})| \leq C_{lip}(\gamma_t)|\tilde U_{s}-U_{s}|
$$
and
$$
|\E(\phi(oU_s))-\E(\phi(o\tilde U_s))|\leq C_{lip}(\gamma_t)\E(|\tilde U_{s}-U_{s}|).
$$ 

We conclude that 
$$
\E\left(\sup_{s\leq t}|U_s-\tilde U_s|\right)\leq  [2\gamma_t C_{lip}(\gamma_t)+2hC_{lip}(\gamma_t)] \int_0^t\E(|U_s-\tilde U_s|)ds.
$$
The assertion then follows from  Grönwall's lemma.
\end{proof}

The proof of part 2 of Theorem \ref{teo:sde} follows from a classical Picard iteration and is given now.

\begin{proof}

\textbf{Step 1.} Here we suppose that $\phi:\R\to \R^+$ is bounded and globally Lipschitz. In Step 2 we will relax this hypothesis. Let us denote $C_{lip}>0$ the Lipschitz constant and $K>0$ the bound such that for all $x,y \in \R$,
$$
|\phi(x)-\phi(y)|\leq C_{lip}|x-y|,
$$
and
$$
|\phi(x)|\leq K .
$$

We consider a classical Picard iteration where
$U^{[0]}_t\equiv U_0$, with $|U_0|\leq L$, and for any $n\geq 1$,
$$
U_t^{[n]}=U_0-\sum_{o\in \opn}\int_0^t \int_0^{+\infty} U_{s-}^{[n]}\mathbf{1}\{z \leq \phi(oU_{s-}^{[n]})\}\mathbf{N}^o(ds,dz)
+h\sum_{o\in \mathcal{O}}o \int_0^t \E[\phi(oU_{s}^{[n-1]})]ds.
$$
Let us fix  $T>0$. Then,
\begin{equation}\label{eq:prioriboundpicard}
|U_t^{[n]}|\leq L+2hKT,
\end{equation}
for any $t\leq T$ and for any $n\geq 1$.
Therefore,
for any $t>0$ such that $t\leq T$, following the steps of the proof of part 1 of Theorem \ref{teo:sde}, we have that
\begin{multline*}
    \E\left(\sup_{s\leq t}|U_s^{[n]}-U_s^{[n-1]}|\right)\leq 
(L+2hKT)\sum_{o\in \opn}   \int_0^t \E(|\phi(oU_s^{[n]})-\phi(oU_s^{[n-1]})|)ds
\\+h\sum_{o\in \opn}   \int_0^t\E(|(\phi(oU_s^{[n-1]})-\phi(oU_s^{[n-2]})|)ds
\\
\leq 
C(T)\int_0^t \E\left(\sup_{r\leq s}|U_r^{[n]}-U_r^{[n-1]}|\right)ds+2hC_{lip}\int_0^t \E\left(\sup_{r\leq s}|U_r^{[n-1]}-U_r^{[n-2]}|\right)ds,
\end{multline*}
with $C(s)=2(L+2hKs)C_{lip}$, for any $s\geq 0$.

By Grönwall's lemma, we conclude that 
\begin{equation}\label{eq:picardrec}
\E\left(\sup_{s\leq t}|U_s^{[n]}-U_s^{[n-1]}|\right)\leq c(t) \E\left(\sup_{s\leq t}|U_s^{[n-1]}-U_s^{[n-2]}|\right),
\end{equation}
where
$
c(t):=2hC_{lip}e^{tC(T)}.
$
Since $c:[0,T]\to \R^+$ is an increasing and continuous function satisfying $c(0)=0$, there exists $t^* \in (0, T ) $ such that $c(t^*)<1$. The a priori bound \eqref{eq:prioriboundpicard} implies that
$$
\E\left(\sup_{s\leq t^*}|U_s^{[1]}-U_s^{[0]}|\right) \leq 2L+2hKT.
$$
Therefore, by iterating \eqref{eq:picardrec}, we conclude that
$$
\E\left(\sup_{s\leq t^*}|U_s^{[n]}-U_s^{[n-1]}|\right) \leq (c(t^*))^n(2L+2hKT).
$$
In particular, 
$$
\E\left(\sum_{n=1}^{\infty}\sup_{s\leq t^*}|U^{[n]}_s-U^{[n-1]}_s| \right) < \infty ,
$$
and therefore, putting
$$
\bar{U}_s=U^{[0]}_s+\sum_{n=1}^{\infty}(U^{[n]}_s-U^{[n-1]}_s) ,
$$
we have that
$$
U^{[n]}\to \bar{U}, \text{ as } n \to \infty,
$$
uniformly in $[0,t^*]$ both a.s and in $L^1$. 

In the following we will show that $(\bar{U}_t)_{t\in [0,t^*]}$ is indeed solution of the limit equation \eqref{eq:limiteq}. First note that, for any $t\leq t^*$ and any $o \in \opn$,
$$
\left|\int_0^t \E[\phi(oU_{s}^{[n]})]ds - \int_0^t \E[\phi(o\bar{U}_{s})]ds\right| \leq C_{lip}\int_0^t \E[|U_{s}^{[n]}-\bar{U}_{s}|]ds.
$$ 
By the a priori bound \eqref{eq:prioriboundpicard} and by the convergence of $(U_s^{[n]})_{s\leq t^*}$ to $(\bar{U}_s)_{s\leq t^*}$ in $L^1$, we conclude that 
\begin{equation}\label{eq:driftconvergencepicard}
\int_0^t\E[|U_{s}^{[n]}-\bar{U}_{s}|]ds \to 0, \text{ as } n\to \infty.
\end{equation}

Moreover, by the a priori bound \eqref{eq:prioriboundpicard}, using similar arguments as those above, it follows that
\begin{multline*}
    \E\left(\left|\sum_{o\in \opn}\int_0^t \int_0^{+\infty}\left( U_{s-}^{[n]}\mathbf{1}\{z \leq \phi(oU_{s-}^{[n]})\}-  \bar{U}_{s-}\mathbf{1}\{z \leq \phi(o \bar{U}_{s-})\}\right)\mathbf{N}^o(ds,dz)\right|\right) \leq 
\\
\tilde{C}(t)\int_0^t\E(|U_{s}^{[n]}-\bar{U}_{s}|)ds \to 0, \text{ as } n\to \infty ,
\end{multline*}
for some $\tilde{C}(t)>0$. Taking a subsequence $(n_k)_{k\geq 1}$ to obtain almost sure convergence, we have that for any $o\in \opn$,
\begin{equation}
\label{eq:picardlimit2}
\int_0^t \int_0^{+\infty} U_{s-}^{[n_k]}\mathbf{1}\{z \leq \phi(oU_{s-}^{[n_k]})\}\mathbf{N}^o(ds,dz)\to \int_0^t \int_0^{+\infty} \bar{U}_{s-}\mathbf{1}\{z \leq \phi(oU_{s-})\}\mathbf{N}^o(ds,dz),
\end{equation}
a.s. as $k\to \infty$.
Putting together \eqref{eq:driftconvergencepicard} and \eqref{eq:picardlimit2} we conclude that $(\bar{U}_t)_{t\in [0,t^*]}$ satisfies \eqref{eq:limiteq}. 

Once the convergence is proven in the time interval $[0,t^*]$, we can proceed by iteration
over successive intervals $[kt^*, (k + 1)t^*]$ to conclude that $(\bar{U}_t)_{t\in [0,T]}$ is solution of the limit equation \eqref{eq:limiteq} in $[0,T]$. Note that for any $T>0$, we can find a solution of \eqref{eq:limiteq} in $[0,T]$, that we will denote in the following as $(\bar{U}_t^{\{T\}})_{t\in [0,T]}$. Moreover, by the pathwise uniqueness, for any $T<T'$,
$\bar{U}_t^{\{T\}}=\bar{U}_t^{\{T'\}}$ for all $t \in [0,T]$.
Therefore, 
$$
U_t=\sum_{k=1}^{+\infty}\bar{U}_t^{\{k\}}\mathbf{1}\{k-1\leq t < k\}
$$
is a solution of \eqref{eq:limiteq} in $\R^+$.

\textbf{Step 2.} Now, consider $\phi$ satisfying only Assumption \ref{as:phi2} ($\phi$ is locally Lipschitz). 
Fix a time horizon  $T>0$. Then, recalling once more Remark \ref{remark:ctdef}, by Proposition \ref{boundlimit1} and Assumption \ref{as:boundedandcont}, we know that a priori
$$
\sup\{|U_{s}|: s \in [0,T]\} \leq  \gamma_T =: r .
$$  
Let
$$
\tilde{\phi}^r(x)=
\begin{cases}
\phi(x), &\text{ if }|x|\leq r, \\
\phi(r), &\text{ if }x> r, \\
\phi(-r), &\text{ if }x< -r.
\end{cases}
$$

Since $\phi$ is locally Lipschitz by Assumption \ref{as:phi2}, we have that $\tilde{\phi}^r$ is globally Lipschitz and bounded. Let $C_{lip}^r>0$ denote this Lipschitz constant. 
For any $r>0$, by Step 1 there exists a unique strong solution for
$$
\tilde{U}_t^r=\tilde{U}_0^r-\sum_{o\in \opn}\int_0^t \int_0^{+\infty} \tilde{U}_{s-}^r\mathbf{1}\{z \leq \tilde{\phi}^r(o\tilde{U}_{s-}^r)\}\mathbf{N}^o(ds,dz)
+h\sum_{o\in \mathcal{O}}o \int_0^t \E[\tilde{\phi}^r(o\tilde{U}_{s}^r)]ds.
$$
By Proposition \ref{boundlimit1} and recalling Remark \ref{remarkphitilde}, for any $s\leq T$ and for any $o\in \opn$,
$$
\tilde{\phi}^{r}(o\tilde{U}_s^{r})=\phi(o\tilde{U}_s^{r}).
$$
Therefore, $(\tilde{U}_t^{r})_{t\leq T}$ is a solution of \eqref{eq:limiteq} (with the original function $\phi:\R\to\R^+)$ in $[0,T]$. Since this argument is valid for all $T>0$, we have proven the unique existence of \eqref{eq:limiteq} in $\R^+$. 

\end{proof}

\section{Proof of Theorem \ref{teo:convergence}} \label{sec:convergence}
We recall that by \eqref{eq:finitemodel}, for any $a\in \A$ and for any $N\geq 2$,
\begin{multline*}
    U_t^{N}(a)=U_0^{N}(a)-\sum_{o \in \mathcal{O}}\int_0^t \int_0^{+\infty} U_{s-}^{N}(a)\mathbf{1}\{z \leq \phi(o U_{s-}^{N}(a))\}\mathbf{N}^{a, o}(ds,dz)
\\
+ \frac{h}{N}\sum_{o \in \mathcal{O}}\sum_{b\neq a}\int_0^t \int_0^{+\infty} 
o\mathbf{1}\{z \leq \phi(oU_{s-}^{N}(b))\}\mathbf{N}^{b,o}(ds,dz),
\end{multline*}
where $(\mathbf{N}^{a,o}(ds,dz): a\in \A)$ is an i.i.d. family of Poisson random measures  on $\R^+ \times \R^+$ having intensity $dsdz$. Grant Assumptions \ref{as:phi2} and \ref{as:boundedandcont}. For any $a\in \A$, let
\begin{equation}\label{eq:limitmodelcoupling}
U_t(a)=U_0^N(a)-\sum_{o \in \opn}\int_0^t \int_0^{+\infty} U_{s-}(a)\mathbf{1}\{z \leq \phi(U_{s-}(a))\}\mathbf{N}^{a,o}(ds,dz)
+h\sum_{o\in \mathcal{O}}o \int_0^t \E[\phi(oU_{s}(a))]ds.
\end{equation}
In other words,
we construct together the finite model with $N$ social actors and $N$ copies of the limit equation using the same family of Poisson random measures and the same family of i.i.d. initial distributions. 

Now we can prove Theorem \ref{teo:convergence}. 

\begin{proof}[Proof of Theorem \ref{teo:convergence}] 
We work on $ [0, T ] $ for some fixed $ T > 0.$ Let us start by calculating $|U_t^N(a)-U_t(a)|$, for $t \le T,$ defined in \eqref{eq:finitemodel} and \eqref{eq:limitmodelcoupling}. We have that
\begin{eqnarray}\label{eq:utnminusut}
&&|U_t^N(a)-U_t(a)|\leq   
\\
&&\sum_{o\in \opn}\Big[\int_0^t \int_0^{+\infty}-|U_{s-}^N(a)-U_{s-}(a)|\mathbf{1}\{z \leq  \phi(oU_{s-}^N(a)) \wedge \phi(oU_{s-}(a))\}  \nonumber 
\\
&& +|U_{s-}^N(a)|\vee |U_{s-}(a)| (|\mathbf{1}\{z\leq  \phi(oU_{s-}^N(a))\}
-\mathbf{1}\{z\leq  \phi(oU_{s-}(a))\}|)\Big] \mathbf{N}^{a,o}(ds,dz)\nonumber 
\\
&&+ \left|\frac{h}{N}\sum_{o \in \mathcal{O}}\sum_{b\neq a}\int_0^t \int_0^{+\infty} 
o\mathbf{1}\{z \leq \phi(oU_{s-}^{N}(b))\}\mathbf{N}^{b,o}(ds,dz)-h\int_0^t \E[\phi(U_s(a))-\phi(-U_s(a))]ds\right|.\nonumber
\end{eqnarray} 
To deal with the last term on the right-hand side of \eqref{eq:utnminusut}, 
we note that
\begin{multline} \label{eq:driftdecomposition}
h\int_0^t \E[\phi(U_s(a))-\phi(-U_s(a))]ds=
\\ \frac{h}{N}\sum_{b\in \A}\sum_{o\in \opn}\left[\int_0^t \int_0^{\infty} o\mathbf{1}\{z\leq \phi(oU_{s-}(b))\} \mathbf{N}^{b,o}(ds,dz)-
\int_0^t \int_0^{\infty}o\mathbf{1}\{z\leq \phi(oU_{s-}(b))\}\tilde{\mathbf{N}}^{b,o}(ds,dz)\right]
\\ +h\int_0^t \E[\phi(U_s(a))-\phi(-U_s(a))]ds-\frac{h}{N}\sum_{b\in \A}\sum_{o\in \opn}\int_0^t o\phi(oU_{s}(b))ds,
\end{multline}
where for each $b\in \A$ and $o\in \opn$,
$\tilde{\mathbf{N}}^{b,o}(ds,dz)=\mathbf{N}^{b,o}(ds,dz)-dsdz$. 

Due to the exchangeability of the system, which is a consequence of Assumption \ref{as:boundedandcont}, we have 
\begin{equation}\label{eq:hNvtbound}
\left|h\int_0^t \E(\phi(U_s(a))-\phi(-U_s(a))ds-\frac{h}{N}\sum_{b\in \A}\sum_{o\in \opn}\int_0^t o\phi(oU_{s}(b))ds\right| \leq \frac{h}{N}\times |V_t|,
\end{equation}
where
\begin{equation} \label{eq:vtdefinition}
V_t\mydef \sum_{b\in \A}\sum_{o\in \opn}\int_0^t o[\E(\phi(oU_s(b)))-\phi(oU_{s}(b))]ds.
\end{equation}
By putting together \eqref{eq:driftdecomposition} and \eqref{eq:hNvtbound} and by denoting
\begin{equation} \label{eq:martingaledefinition}
M_t \mydef\sum_{b\in \A}\sum_{o\in \opn}\int_0^t \int_0^{\infty}o\mathbf{1}\{z\leq \phi(oU_{s-}(b))\}\tilde{\mathbf{N}}^{b,o}(ds,dz),
\end{equation}
the last term of the right-hand side of \eqref{eq:utnminusut} is therefore upper bounded by
\begin{equation} \label{eq:driftdecomposition2}
\frac{h}{N}\left|\sum_{b\in \A}\sum_{o\in \opn}\int_0^t \int_0^{\infty} o\mathbf{1}\{z\leq \phi(oU_{s-}(b))\} - o
\mathbf{1}\{z \leq \phi(oU_{s-}^{N}(b))\}\mathbf{N}^{b,o}(ds,dz)\right|
\end{equation}
$$
+\left|\frac{h}{N}\sum_{o \in \mathcal{O}}\int_0^t \int_0^{+\infty} 
o\mathbf{1}\{z \leq \phi(oU_{s-}^{N}(a))\}\mathbf{N}^{a,o}(ds,dz)\right|+\frac{h}{N}|V_t|+\frac{h}{N}|M_t|.
$$
Note that the first term in the second line of \eqref{eq:driftdecomposition2} is upper bounded by $\frac{h}{N} \times Z_t^N(a)$, defined in \eqref{eq:ztndefinition}.

Therefore, by putting together \eqref{eq:utnminusut}, \eqref{eq:driftdecomposition} and \eqref{eq:driftdecomposition2} 
we have that
\begin{multline} \label{eq:couplingfinitelimit}
\E\left(\sup_{s\leq t}|U_t^N(a)-U_t(a)|\right)\leq 
\\
\int_0^t \E\left[
|U_{s}^N(a)|\vee |U_{s}(a)|\left(\sum_{o\in \opn} |\phi(oU_{s}^N(a))-\phi(oU_{s}(a))|\right) \right]ds
\\
+\frac{h}{N}\sum_{b\in \A} \int_0^t\E\left[|\phi(U_{s}^N(b))-\phi(U_{s}(b))|+|\phi(-U_{s}^N(b))-\phi(-U_{s}(b))|\right] ds
\\
+ \frac{h}{N}\E(Z_t^N(a))+\frac{h}{N}\E\left(\sup_{s\leq t}|V_s|\right)+\frac{h}{N}\E\left(\sup_{s\leq t}|M_s|\right).
\end{multline}
Now, we analyze each term on the right-hand side of \eqref{eq:couplingfinitelimit}.

Recalling Remark \ref{remark:ctdef}, by Proposition \ref{boundlimit1} and Assumption \ref{as:boundedandcont}, we know that
\begin{equation}\label{eq:ctdefinitionteo3}
\sup\{|U_s(a)|:s\leq t\} \leq 2L+2htM^<(2h) = \gamma_t.
\end{equation}
Note that on the event
$$
\left\{\frac{hZ^N_{s}}{N} \leq l\right\},
$$
for $l>0$, it follows from \eqref{eq:boundsocialandreset} and Assumption \ref{as:boundedandcont} that
$$
|U_{s}^N(a)| \leq L+l.
$$
Recall that $\phi$ is locally Lipschitz due to Assumption \ref{as:phi2}. Denote by $C_{lip}(r)$ the Lipschitz constant of $\phi$ restricted to $[-r,r]$, for any $r>0.$ 

In what follows, we work with a fixed truncation level $ l >0.$ The precise choice of $l, $ depending on $T,$ will be made in \eqref{eq:lstardefinition} below. For the moment being, we upper bound for any fixed $l>0$,
\begin{multline}\label{eq:teo3largedev}
\int_0^t \E\left[
|U_{s}^N(a)|\vee |U_{s}(a)|\left(\sum_{o\in \opn} |\phi(oU_{s}^N(a))-\phi(oU_{s}(a))|\right) \right]ds
 \leq \\
 2[(L+l)\vee \gamma_t]C_{lip}((L+l)\vee \gamma_t)\int_0^t \E\left[|U_{s}^N(a)-U_{s}(a)|\right]ds+ \\
\int_0^t \E\left[
|U_{s}^N(a)|\vee |U_{s}(a)|\left(\sum_{o\in \opn} |\phi(oU_{s}^N(a))-\phi(oU_{s}(a))|\right)\mathbf{1}\left\{\frac{hZ^N_{s}}{N}> l\right\} \right]ds.
\end{multline}
In what follows we will show how to upper bound the last term on the right-hand side of \eqref{eq:teo3largedev}. 
To do so, we will use our a priori upper bounds. We start observing that, for any $s\geq 0$, $|U_{s}^N(a)|\vee |U_{s}(a)|\leq |U_{s}^N(a)|+|U_{s}(a)|$ and 
$$
\sum_{o\in \opn} |\phi(oU_{s}^N(a))-\phi(oU_{s}(a))| \leq \Phi(U_{s}^N(a))+\Phi(U_{s}(a)).
$$ 
Moreover, recall that $\gamma_t$ is an upper bound for $|U_s(a)|$, for any $s\leq t$ and $a\in \A$. This implies that  $M^<(\gamma_t)$ is an upper bound for $\Phi(U_s(a))$, for any $s\leq t$ and $a\in \A$. 
Putting all this together,
we have that
the  term of the last line of \eqref{eq:teo3largedev} is upper bounded by
\begin{equation} \label{eq:teo3_eq2}
\int_0^t \E\left[|U_{s}^N(a)|\Phi(U_{s}^N(a))\mathbf{1}\left\{\frac{hZ_{s}^N}{N}> l\right\}\right]ds+\gamma_t\int_0^t \E\left[\Phi(U_{s}^N(a))\mathbf{1}\left\{\frac{hZ_{s}^N}{N}> l\right\}\right]ds
\end{equation}
$$
+M^<(\gamma_t)\int_0^t \E\left[|U_{s}^N(a)|\mathbf{1}\left\{\frac{hZ_{s}^N}{N}> l\right\}\right]ds+\gamma_t M^<(\gamma_t)t\P\left[\frac{hZ_{t}^N}{N}> l\right].
$$

Now we use the following important trick that allows us to rewrite the first term in \eqref{eq:teo3_eq2} as
$$
\E\left(\sum_{o \in \opn}\int_0^t \int_0^{+\infty} |U_{s}^{N}(a)|\mathbf{1}\{z \leq \phi(o U_{s}^{N}(a))\}\mathbf{1}\left\{\frac{hZ_{s}^N}{N}> l\right\}dsdz\right)=
$$
$$
\E\left(\sum_{o \in \opn}\int_0^t \int_0^{+\infty} |U_{s-}^{N}(a)|\mathbf{1}\{z \leq \phi(o U_{s-}^{N}(a))\}\mathbf{1}\left\{\frac{hZ_{s-}^N}{N}> l\right\}\mathbf{N}^{a, o}(ds,dz)\right).
$$
Therefore, using that $(Z_s^N)_{s\geq 0}$ is increasing and that for any positive random variable $X$, $\E(X)=\int_0^{\infty}\P(X>x)$, we have that
the first term in \eqref{eq:teo3_eq2} is upper bounded by
$$
\E\left(\mathbf{1}\left\{\frac{hZ_{T}^N}{N}> l\right\}\sum_{o \in \opn}\int_0^t \int_0^{+\infty} |U_{s-}^{N}(a)|\mathbf{1}\{z \leq \phi(o U_{s-}^{N}(a))\}\mathbf{N}^{a, o}(ds,dz)\right)=
$$
$$
 \int_0^{\infty}\P\left(\mathbf{1}\left\{\frac{hZ_{T}^N}{N}> l\right\}\left(\sum_{o \in \opn}\int_0^t \int_0^{+\infty} |U_{s-}^{N}(a)|\mathbf{1}\{z \leq \phi(o U_{s-}^{N}(a))\}\mathbf{N}^{a, o}(ds,dz)\right)>x\right)dx.
$$
By \eqref{eq:boundsocialandreset} and Assumption \ref{as:boundedandcont}, 
we have that the last term above is upper bounded by
$$
\int_0^{\infty}\P\left(\mathbf{1}\left\{\frac{hZ_{T}^N}{N}> l\right\}\left(L+h\frac{Z_{T}^N}{N}\right)>x\right)dx =
$$
\begin{equation} \label{eq:teo3_eq25}
 (L+l)\P\left(\frac{hZ_{T}^N}{N}> l\right)+\int_{L+l}^{\infty}\P\left(L+h\frac{Z_{T}^N}{N}>x\right)dx.    
\end{equation}
By Proposition \ref{bound1}, the right-hand side of \eqref{eq:teo3_eq25} is upper bounded by
\begin{equation}
\label{eq:teo3_eq3}
(L+l)\P\left(P_T(N\times M^<(2h))>\frac{N}{2h}(l-h)\right)+\int_{L+l}^{\infty}\P\left(L+h+\frac{2h}{N}\times P_T(N\times M^<(2h))>x\right)dx.
\end{equation}
In what follows, we use that a Poisson random variable $P(\lambda)$ with mean $\lambda>0$ satisfies for any $x>0$ 
$$
\P(P(\lambda)>\lambda+x)\leq e^{-x^2/(2(\lambda+x))}.
$$
By taking 
\begin{equation}\label{eq:lstardefinition}
l=l^*(T)=2hTM^<(2h)+3h , 
\end{equation}
we have that
\begin{equation}\label{eq:poissonboundcnt}
\P\left(\frac{hZ_{T}^N}{N}>l^*(T)\right)
\leq \P\left(P_T(N\times M^<(2h))>\frac{N}{2h}(l^*(T) -h)\right)
\leq e^{-Nh/(l^*(T)-h)} = e^{ - c_T N },
\end{equation}
where $ c_T =  \frac{1}{2(T M^< ( 2h) + 1)}.$
Therefore, the first term in \eqref{eq:teo3_eq3} is upper bounded by \footnote{In the definition of $\epsilon (N, T )$ we added $1$ to the factor $L+l^*(T)$ to ensure that $\epsilon (N, T )$ is also an upper bound for \eqref{eq:poissonboundcnt}. }
\begin{equation}\label{eq:cntdefinition}
\epsilon (N, T ) :=(1+L+l^*(T)) e^{ - c_T N }
\end{equation}
and the second term is upper bounded by 
$$
e^{ - c_T N }
\int_{0}^{\infty}e^{-Nx/(4h(TM^<(2h)+1))}dx=
\frac{2l^*(T)}{N} e^{-Nh/(l^*(T)-h)}\leq \epsilon (N, T ).
$$
With this, we conclude that the  first term of \eqref{eq:teo3_eq2} satisfies 
\begin{equation} \label{eq:firsttermbound}
\int_0^t \E\left[|U_{s}^N(a)|\Phi(U_{s}^N(a))\mathbf{1}\left\{\frac{hZ_{s}^N}{N}> l^* ( T) \right\}\right]ds \leq 2\epsilon (N, T ).
\end{equation}
To upper bound the second term of \eqref{eq:teo3_eq2}, note that
\begin{multline*}
\int_0^t \E\left[\Phi(U_{s}^N(a))\mathbf{1}\left\{\frac{hZ_{s}^N}{N}> l^*(T)\right\}\right]ds \leq
\\
\frac{1}{2h}\int_0^t \E\left[|U_{s}^N(a)|\Phi(U_{s}^N(a))\mathbf{1}\left\{\frac{hZ_{s}^N}{N}> l^*(T)\right\}\right]ds+
\\
\int_0^t \E\left[\Phi(U_{s}^N(a))\mathbf{1}\left\{\frac{hZ_{s}^N}{N}> l^*(T)\right\}\mathbf{1}\left\{|U_{s}^N(a)|\leq 2h\right\}\right]ds.
\end{multline*}
By \eqref{eq:poissonboundcnt} and \eqref{eq:firsttermbound}, we conclude that the right-hand side of the inequality above is upper bounded by 
\begin{equation}\label{eq:secondtermbound}
\frac{1}{h}\epsilon (N, T )+tM^<(2h)\P\left(\frac{hZ_{T}^N}{N}> l^*(T)\right)\leq \epsilon (N, T )\left(\frac{1}{h}+tM^<(2h)\right).
\end{equation}
To upper bound the third term of \eqref{eq:teo3_eq2}, note that, by Proposition \ref{bound1} and recalling \eqref{eq:teo3_eq25}, we have that
$$
\int_0^t \E\left[|U_{s}^N(a)|\mathbf{1}\left\{\frac{hZ_{s}^N}{N}> l^*(T)\right\}\right]ds \leq t\E\left((L+h\frac{Z_{T}^N}{N})\mathbf{1}\left\{\frac{hZ_{T}^N}{N}> l^*(T)\right\}\right)\leq
$$
\begin{equation}\label{eq:thirdtermbound}
t\int_0^{\infty}\P\left((L+h\frac{Z_{T}^N}{N})\mathbf{1}\left\{\frac{hZ_{T}^N}{N}> l^* ( T) \right\}>x\right)dx \leq 
2 t\epsilon (N, T ).
\end{equation}
To upper bound the forth term of \eqref{eq:teo3_eq2} just recall \eqref{eq:poissonboundcnt}.

Therefore, by taking $l=l^*(T)$ in \eqref{eq:teo3_eq2}, considering \eqref{eq:teo3largedev} and using the upper bounds obtained in \eqref{eq:poissonboundcnt}, \eqref{eq:firsttermbound}, \eqref{eq:secondtermbound} and \eqref{eq:thirdtermbound}, we have that the first term on the right hand side of \eqref{eq:couplingfinitelimit} is upper bounded by
\begin{equation}
\label{eq:finalbound1}
2(2L+l^*(T))C_{lip}(2L+l^*(T))\int_0^t \E\left[|U_{s-}^N(a)-U_{s-}(a)|\right]ds+
\end{equation}
$$
\epsilon (N, T )\left(2+\gamma_t\left(\frac{1}{h}+tM^<(2h)\right)+2tM^<(\gamma_t)+
\gamma_t M^<(\gamma_t) t  \right).
$$

Back to \eqref{eq:couplingfinitelimit}, note that the exchangeability of the system, assured by Assumption \ref{as:boundedandcont}, implies that
\begin{equation} \label{eq:teo3_eq4}
\frac{h}{N}\sum_{b\in \A} \int_0^t\E\left[|\phi(U_{s}^N(b))-\phi(U_{s}(b))|+|\phi(-U_{s}^N(b))-\phi(-U_{s}(b))|\right] ds=
\end{equation}
$$
h\int_0^t \E\left[\sum_{o\in \opn} |\phi(oU_{s}^N(a))-\phi(oU_{s}(a))| \right]ds.
$$
By decomposing the right-hand side of \eqref{eq:teo3_eq4} as we did on \eqref{eq:teo3largedev} and using the upper bounds obtained above for the second term on the right hand side of \eqref{eq:couplingfinitelimit}, 
we have that the right-hand side of \eqref{eq:teo3_eq4} is upper bounded by
\begin{equation}\label{eq:finalbound2}
2hC_{lip}(2L+l^*(T))\int_0^t \E\left[|U_{s}^N(a)-U_{s}(a)|\right]ds+
\end{equation}
$$
h\int_0^t \E\left[\Phi(U_{s}^N(a))\mathbf{1}\left\{\frac{hZ_{s}^N}{N}> l^*(T)\right\} \right]ds+ h\int_0^t \E\left[\Phi(U_{s}(a))\mathbf{1}\left\{\frac{hZ_{s}^N}{N}> l^*(T)\right\} \right]ds \leq
$$
$$
2hC_{lip}(2L+l^*(T))\int_0^t \E\left[|U_{s}^N(a)-U_{s}(a)|\right]ds+
\epsilon (N, T )\left(1+htM^<(2h)+htM^<(\gamma_t)\right).
$$

To finish our analysis of the term on the right-hand side of \eqref{eq:couplingfinitelimit}, we just need to study the terms on the last line of \eqref{eq:couplingfinitelimit}. First, note that Remark \ref{rem:ztabound} implies that
\begin{equation}\label{eq:ztnabound}
\frac{h}{N}\E(Z_t^N(a))\leq \frac{1}{N}(h+2htM^<(2h)).
\end{equation}
Now, recall that the martingale $M_t,$ defined in \eqref{eq:martingaledefinition}, is a sum of martingales, each associated to a compensated Poisson random measure $\tilde{\mathbf{N}}^{a,o}. $ Since these Poisson random measures are independent, the corresponding martingales are orthogonal. Therefore, using Burkholder-Davis-Gundy inequality, 
$$
\frac{h}{N}\E \left( \sup_{s \le t }|M_s| \right) \leq C \frac{h}{N}\left(\E\left(\sum_{b\in \A}\sum_{o\in \opn}\int_0^t \int_0^{\infty}\mathbf{1}\{z\leq \phi(oU_{s-}(b))\}dsdz\right)\right)^{1/2}.
$$
The exchangeability of the system together with Proposition \ref{boundlimit2} imply that the right-hand side of the equation above is upper bounded by
\begin{equation} \label{eq:mtbound}
\frac{Ch}{N^{1/2}}\left(\int_0^t\E(\Phi(U_s(a))ds\right)^{1/2} \leq \frac{Ch}{N^{1/2}}(tM^<(\gamma_t))^{1/2}.
\end{equation}
Finally, recall that $V_t$ is defined in \eqref{eq:vtdefinition}. By Jensen's inequality, 
\begin{equation}\label{eq:vtproof}
\frac{h}{N}\E \left( \sup_{s \le t }|V_s| \right) \leq \frac{h}{N}\left(\E\left(\sup_{s \le t }V_s^2\right)\right)^{1/2}.
\end{equation}
Using Jensen's inequality again, for any $s\leq t$,
\begin{multline*}
    V_s^2=
\left(\sum_{b\in \A}\sum_{o\in \opn}\int_0^s o[\E(\phi(oU_r(b)))-\phi(oU_{r}(b))]dr\right)^2
\\
\leq s\int_0^s\left(\sum_{b\in \A}\sum_{o\in \opn} o[\E(\phi(oU_r(b)))-\phi(oU_{r}(b))]\right)^2dr
\\
\leq 
t\int_0^t\left(\sum_{b\in \A}\sum_{o\in \opn} o[\E(\phi(oU_s(b)))-\phi(oU_{s}(b))]\right)^2ds.
\end{multline*}
This implies that the right-hand side of \eqref{eq:vtproof} is upper bounded by
$$
\frac{h}{N}\left[t\E\left(\int_0^t\left(\sum_{b\in \A}\sum_{o\in \opn} o[\E(\phi(oU_s(b)))-\phi(oU_{s}(b))]\right)^2ds\right)\right]^{1/2}.
$$
Since the $ ((U_s ( b))_{ s \geq 0}: b\in \A) $ are i.i.d. copies of the limit system, and recalling Proposition \ref{boundlimit1}, we conclude that the right-hand side of the last equation above is upper bounded by
\begin{equation}\label{eq:vtbound}
\frac{ht^{1/2}}{N^{1/2}}\left[\E\left(\int_0^t\left(\sum_{o\in \opn} o[\E(\phi(oU_s(a)))-\phi(oU_{s}(a))]\right)^2ds\right)\right]^{1/2} \leq \frac{ht}{N^{1/2}}(2M^<(\gamma_t)),
\end{equation}
where the last inequality above follows from the a priori bound from Proposition \ref{boundlimit1}.

Using that $ \epsilon ( N, T ) \le C_T N^{- 1/2}, $ we have 
therefore proved that for all $ t \le T,$
$$
\E\left(\sup_{s\leq t}|U_s^N(a)-U_s(a)|\right)\leq C_1(L,T,h,\phi) \int_0^t\E[|U_s^N(a)-U_s(a)|]ds+C_2(L,T,h,\phi)N^{-1/2}
$$
where $C_1$ and $C_2$ are given by putting together \eqref{eq:finalbound1}, \eqref{eq:finalbound2}, \eqref{eq:ztnabound}, \eqref{eq:mtbound} and \eqref{eq:vtbound}. 
Then Grönwall's lemma implies the assertion.
\end{proof}

\section{Proof of Theorem \ref{teo:invariant}} \label{sec:invariant}

\begin{proof}[Proof of Theorem \ref{teo:invariant}]
Consider the SDE
\begin{equation}\label{eq:auxprocess}
Y_t=Y_0-\sum_{o \in \mathcal{O}}\int_0^t \int_0^{+\infty} Y_{s-}\mathbf{1}\{z \leq \phi(o Y_{s-})\}\mathbf{N}^o(ds,dz)
+h\gamma t,
\end{equation}
where $\gamma \in \R$ is some fixed parameter. Note that, for $\gamma\neq 0,$ $(Y_t)_{t\geq 0}$ is simply a continuous time version of a house-of-cards process that jumps to $0$ at time $t>0$ with rate $\Phi(Y_t)$ and between jumps, it increases (if $\gamma>0$) or decreases (if $\gamma<0)$ linearly (a line with slope $h\gamma$). In the case $\gamma=0$, this process remains at $Y_0$ for a exponential random time with rate $\Phi(Y_0)$ and after that random time, $(Y_t)_{t\geq 0}$ gets trapped at $0$. Therefore, under Assumption \ref{as:phi3}, \eqref{eq:auxprocess} has $\delta_{0}$ as unique invariant probability measure, for $ \gamma = 0.$

Note that, for any fixed $\gamma \in \R$, \eqref{eq:auxprocess} has a path-wise unique solution for every initial condition $Y_0$ under Assumption \ref{as:phi1}. Indeed, for any $t\geq 0$
$$
|Y_t|\leq |Y_0|+h\gamma t.
$$
This implies that, conditionally on $Y_0=y_0$, $(Y_s)_{s\in[0,t]}$ can be built using an acceptance-rejection algorithm and a homogeneous Poisson process with constant rate $M^<(|y_0|+h\gamma t)<\infty$. Since this holds for any $y_0\in \R$, the pathwise uniqueness and the strong existence of \eqref{eq:auxprocess} follow.

To study the invariant probability measures of \eqref{eq:auxprocess} for $\gamma \neq 0$, let us denote
$$
\tau_x^{\gamma}=\inf\{t>0:Y_t^x=0\},
$$
where $(Y_t^x)_{t\geq 0}$ is given by \eqref{eq:auxprocess} with $Y_0^x=x$.

We start studying the case when $\gamma>0$. Note that, for any $x>0$ and for any $s>0$,
\begin{equation}\label{eq:invariantnumerator}
\P(\tau^{\gamma}_x>s)=\exp\left(-\int_0^s\Phi(x+h\gamma r)dr\right) ,
\end{equation}
and therefore, under Assumption \ref{as:phi4},
\begin{equation}\label{eq:invariantdenominator}
\E(\tau_x^{\gamma})=\frac{1}{h\gamma}\int_0^{\infty}\exp\left(-\frac{1}{h\gamma}\int_0^s\Phi(x+r)dr\right)ds<\infty.
\end{equation}
This implies that $0$ is a positive recurrent state of \eqref{eq:auxprocess} 
and therefore, \eqref{eq:auxprocess} has a unique invariant probability measure $\mu_{\gamma}$ for any $\gamma>0$.

For any measurable set $A \subset \R^+$, this unique invariant measure $\mu_{\gamma}$ satisfies
$$
\mu_{\gamma}(A)=\frac{1}{\E(\tau_0^{\gamma})}\E\left(\int_0^{\tau_0^{\gamma}}\mathbf{1}\{Y_s^0\in A\}ds \right)=\frac{1}{\E(\tau_0^{\gamma})}\E\left(\int_0^{\infty}\mathbf{1}\{h\gamma s\in A\}\mathbf{1}\{s\leq \tau_0^{\gamma}\}ds \right)=
$$
$$
\frac{1}{\E(\tau_0^{\gamma})}\frac{1}{h\gamma}\E\left(\int_0^{\infty}\mathbf{1}\{s\in A\}\mathbf{1}\left\{\frac{s}{h\gamma}\leq \tau_0^{\gamma}\right\}ds \right).
$$
Putting together \eqref{eq:invariantnumerator} and \eqref{eq:invariantdenominator}, we conclude that $\mu_{\gamma}$ possesses a Lebesgue density $ g_{\gamma}(x)$ given by
\begin{equation} \label{eq:invariantmeasure}
g_{\gamma}(x)= \exp\left(-\frac{1}{h\gamma}\int_0^x\Phi(r)dr\right)\Big/ \int_0^{\infty}\exp\left(-\frac{1}{h\gamma}\int_0^s\Phi(r)dr\right)ds.
\end{equation}
Using the symmetry of the function $\Phi$ we conclude that for any $x\leq 0$ and $\gamma<0$,
$$
g_{\gamma}(x)=g_{-\gamma}(-x).
$$
is the density of the unique invariant probability measure of \eqref{eq:auxprocess} in the case $\gamma <0$.

Now, let us go back to the original process \eqref{eq:limiteq}. Consider an invariant probability measure $g(dx)$, supported in $\R$ for the SDE solution to \eqref{eq:limiteq}. If $U_0 \sim g$,  then $\E(\phi(U_t)-\phi(-U_t))=\int_x (\phi(x)-\phi(-x)) g(dx)$, for any $t>0$.
Therefore, if $g_{\gamma}$ is an invariant probability measure for the process \eqref{eq:auxprocess}, for $\gamma \in \R$, and $\gamma$ satisfies 
\begin{equation}
\label{gamma}
\gamma=\int_x (\phi(x)-\phi(-x)) g_{\gamma}(dx),
\end{equation}
then $g_{\gamma}$ is a invariant probability measure for the SDE \eqref{eq:limiteq}. This concludes the proof of Part 3 of Theorem \ref{teo:invariant}. 


If $\gamma=0$, under Assumption \ref{as:phi3}, $\delta_0$ is the unique invariant probability measure for \eqref{eq:auxprocess}. Since $\delta_0$ satisfies \eqref{gamma} for $\gamma=0$, we conclude that Part 1 of Theorem \ref{teo:invariant} holds. By noticing  that $g$ must have support in $[0,\infty)$, if $\gamma>0$ and in $(-\infty,0]$, if $\gamma>0$, we conclude the proof of Part 2 of Theorem \ref{teo:invariant}.

Moreover, note that if $g_{\gamma}$ satisfies \eqref{gamma}, then 
$$
-\gamma=-\int_0^{\infty} (\phi(x)-\phi(-x))g_{\gamma}(dx)=\int_{-\infty}^0 (\phi(x)-\phi(-x))g_{-\gamma}(dx).
$$
With this we conclude the proof of Part 4 of Theorem \ref{teo:invariant}.

To prove Part 5 of Theorem \ref{teo:invariant}, note that for $\gamma>0$, by  putting together \eqref{eq:invariantmeasure} and \eqref{gamma}, we have that $g_{\gamma}$ is an invariant probability measure of \eqref{eq:limiteq} if
\begin{equation} \label{eq:invariantmeasureeq}
\int_0^{\infty} (\phi(x)-\phi(-x))\exp\left(-\frac{1}{h\gamma}\int_0^x\Phi(r)dr\right)dx=\gamma \int_0^{\infty}\exp\left(-\frac{1}{h\gamma}\int_0^x\Phi(r)dr\right)dx.
\end{equation}

By considering  Assumption \ref{as:sym}, note that $\phi(x)+\phi(-x)=2B$ and $\phi(x)-\phi(-x)=2f(x)$. Therefore, in this case, \eqref{eq:invariantmeasureeq} reads as 
$$
\int_0^{\infty}\exp\left(-(h\gamma)^{-1}2Bx\right)2f(x)dx=
\gamma\int_0^{\infty}\exp\left(-(h\gamma)^{-1}2Bx\right)dx
$$
which is equivalent to 
$$
\int_0^{\infty}\exp\left(-(h\gamma)^{-1}2Bx\right)f(x)dx=
\frac{\gamma^2h}{4B}.
$$
Making a change of variables, we obtain
\begin{equation}\label{eq:oddfixedpointeq}
\int_0^{\infty}\exp\left(-u\right)f\left(\frac{uh\gamma}{2B}\right)du=
\frac{\gamma}{2}.   
\end{equation}
Observing that  $f':\R \to \R^+$ is bounded, by the Dominated Convergence Theorem, the derivative of the left-hand side of \eqref{eq:oddfixedpointeq} with respect to $\gamma$ is given by
\begin{equation}\label{eq:oddderivative}
\frac{h}{2B}\int_0^{\infty}\exp\left(-u\right)uf'\left(\frac{uh\gamma}{2B}\right)du.
\end{equation}
Evaluating \eqref{eq:oddderivative} when $\gamma=0$, we obtain
$$
\frac{h}{2B}\int_0^{\infty}\exp\left(-u\right)uf'(0)du=\frac{hf'(0)}{2B}.
$$
Note that $f'(x)\to 0$ as $x\to \infty$ since $f$ is bounded. By the Dominated Convergence Theorem,
$$
\frac{h}{2B}\int_0^{\infty}\exp\left(-u\right)uf'\left(\frac{uh\gamma}{2B}\right)du \to 0, \text{ as } \gamma \to \infty.
$$
This implies that if $h>B/f'(0)$, \eqref{eq:oddfixedpointeq} has at least one positive fixed point. 
Moreover,
since $f'(x)$ is  a strictly decreasing function in $[0,\infty)$, we have that \eqref{eq:oddderivative}
is a strictly decreasing function of $\gamma$. We conclude that if $h>L/f'(0)$, \eqref{eq:oddfixedpointeq} has a unique fixed point $\gamma^*_h>0$ and if $h \leq  L/f'(0)$, \eqref{eq:oddfixedpointeq} does not have a positive fixed point. In the case $h>L/f'(0)$, the symmetry properties of the process imply that $\gamma^*_h$, $-\gamma^*_h$ and $0$ are the unique solutions to \eqref{gamma}. This concludes the proof of Part 5 of Theorem \ref{teo:invariant}.
    
\end{proof}

\section*{Acknowledgments}

This work was produced as part of the activities of FAPESP  Research, Innovation and Dissemination Center for Neuromathematics (grant \# 2013/ 07699-0 , S.Paulo Research Foundation (FAPESP).
KL was successively supported by FAPESP fellowship (grant 2022/07386-0 and 2023/12335-9). The authors acknowledge support of the Institut Henri Poincar\'e (UAR 839 CNRS-Sorbonne Universit\'e), and LabEx CARMIN (ANR-10-LABX-59-01). We thank Antonio Galves for interesting discussions in the beginning of this project.

\bibliographystyle{apalike}
\bibliography{bib}

\newpage

\noindent
Eva Löcherbach \\ 
Statistique, Analyse et Mod\'elisation Multidisciplinaire \\ Universit\'e Paris 1 Panth\'eon-Sorbonne \\ 
EA 4543 et FR FP2M 2036 CNRS \\ France \\ e-mail address: eva.locherbach@univ-paris1.fr

\vspace{0.5cm}

\noindent
Kádmo de Souza Laxa \\ Faculdade de Filosofia, Ciências e Letras de Ribeirão Preto  \\ Universidade de São Paulo \\
Av. Bandeirantes, 3900\\
Ribeirão Preto-SP, 14040-901 \\
Brazil \\
e-mail address: \texttt{kadmo.laxa@usp.br}

\end{document}